%2multibyte Version: 5.50.0.2960 CodePage: 1254
\documentclass{article}%
\usepackage{amsmath}
\usepackage{amsfonts}
\usepackage{amssymb}
\usepackage{graphicx}
\usepackage[bookmarks=true,colorlinks=true, pdfstartview=FitV, linkcolor=blue,
citecolor=blue, urlcolor=blue]{hyperref}
\usepackage{graphicx,color}
\usepackage[compress]{cite}%
\setcounter{MaxMatrixCols}{30}
%TCIDATA{OutputFilter=latex2.dll}
%TCIDATA{Version=5.50.0.2960}
%TCIDATA{Codepage=1254}
%TCIDATA{LastRevised=Friday, November 15, 2019 21:05:18}
%TCIDATA{<META NAME="GraphicsSave" CONTENT="32">}
%TCIDATA{<META NAME="SaveForMode" CONTENT="1">}
%TCIDATA{BibliographyScheme=Manual}
%BeginMSIPreambleData
\providecommand{\U}[1]{\protect\rule{.1in}{.1in}}
%EndMSIPreambleData
\newtheorem{theorem}{Theorem}

\newtheorem{corollary}{Corollary}

\newtheorem{lemma}{Lemma}

\newtheorem{proposition}{Proposition}
\newtheorem{remark}{Remark}

\newenvironment{proof}[1][Proof]{\noindent\textbf{#1.} }{\ \rule{0.5em}{0.5em}}
\allowdisplaybreaks
\textwidth 12.5cm \textheight 19.5cm \topmargin -0.3cm
\begin{document}

\title{Polynomials with r-Lah coefficient and hyperharmonic numbers}
\author{Levent Karg\i n\thanks{leventkargin48@gmail.com} and M\"{u}m\"{u}n
Can\thanks{mcan@akdeniz.edu.tr}\\Department of Mathematics, Akdeniz University, Antalya Turkey}
\date{}
\maketitle

\begin{abstract}
In this paper, we take advantage of the Mellin type derivative to produce some
new families of polynomials whose coefficients involve $r$-Lah numbers. One of
these polynomials leads to rediscover many of the identities of $r$-Lah
numbers. We show that some of these polynomials and hyperharmonic numbers are
closely related. Taking into account of these connections, we reach several
identities for harmonic and hyperharmonic numbers.

\textbf{MSC 2010:} 11B75, 11B68, 47E05, 11B73, 11B83.

\textbf{Keywords:} Hyperharmonic numbers, Harmonic numbers, Mellin derivative,
$r$-Lah numbers, Geometric polynomials.

\end{abstract}

\section{Introduction}

Operator theory is one of the attractive way used in number theory as in other
branches of mathematics. One of the often used operators is the operator
$\left(  xD\right)  =x\frac{d}{dx}$, called Mellin derivative \cite{B2}. For a
$n$-times differentiable function $f,$ we have \cite{B3}
\begin{equation}
\left(  xD\right)  ^{n}f\left(  x\right)  =\sum_{k=0}^{n}%
%TCIMACRO{\QATOPD{\{}{\}}{n}{k}}%
%BeginExpansion
\genfrac{\{}{\}}{0pt}{}{n}{k}%
%EndExpansion
x^{k}f^{\left(  k\right)  }\left(  x\right)  , \label{02}%
\end{equation}
where $%
%TCIMACRO{\QATOPD{\{}{\}}{n}{k} }%
%BeginExpansion
\genfrac{\{}{\}}{0pt}{}{n}{k}
%EndExpansion
$ are the Stirling numbers of the second kind. This operator has a long
mathematical history and firstly used by Euler as a tool in one of his study
\cite{E}. Depending on the choice of function $f$ in (\ref{02}) appear
exponential, geometric, higher-order geometric or harmonic geometric
polynomials \cite{B3,Dil}. These polynomials are closely related to Bernoulli
and Euler numbers and some of their generalizations \cite{B4, Kargin2,
Kargin1, Keller}, which numbers play important roles in number theory.
Further, the operator $\left(  xD\right)  $ has been used in the evaluation of
some power series and integrals \cite{B3,BandDil,Dil,Kargin1,Knopf,MandR}.
Numerous generalizations of the Mellin derivative have also been studied
according to the generalizations of the Stirling numbers of the second kind
\cite{B3,Dil2, Kargin, Kargin1}.

The $r$-Lah numbers $%
%TCIMACRO{\QATOPD{\lfloor}{\rfloor}{n}{k} }%
%BeginExpansion
\genfrac{\lfloor}{\rfloor}{0pt}{}{n}{k}
%EndExpansion
_{r}$can be defined by following generating function \cite{NyRa}%
\begin{equation}
\frac{1}{k!}\left(  \frac{t}{1-t}\right)  ^{k}\left(  \frac{1}{1-t}\right)
^{2r}=\sum_{n=k}^{\infty}
%TCIMACRO{\QATOPD{\lfloor}{\rfloor}{n}{k}}%
%BeginExpansion
\genfrac{\lfloor}{\rfloor}{0pt}{}{n}{k}%
%EndExpansion
_{r}\frac{t^{n}}{n!}, \label{rLgf}%
\end{equation}
which are also mentioned and studied with different names in \cite{Chen,
Belbachir, MR}. The Lah numbers $%
%TCIMACRO{\QATOPD{\lfloor}{\rfloor}{n}{k} }%
%BeginExpansion
\genfrac{\lfloor}{\rfloor}{0pt}{}{n}{k}
%EndExpansion
=%
%TCIMACRO{\QATOPD{\lfloor}{\rfloor}{n}{k} }%
%BeginExpansion
\genfrac{\lfloor}{\rfloor}{0pt}{}{n}{k}
%EndExpansion
_{0}$ (rarely called Stirling numbers of the third kind \cite{S}) have many
interesting applications in analysis and combinatorics \cite{Ahuja, Barry,
B5,Guo, GandQ1}.

The hyperharmonic numbers $h_{n}^{\left(  r\right)  }$ are defined by
\cite{CG}
\[
h_{n}^{\left(  r\right)  }=\sum_{k=1}^{n}h_{k}^{\left(  r-1\right)  },\text{
}h_{k}^{\left(  0\right)  }=\frac{1}{k}%
\]
and have the generating function \cite{Benjamin}%
\begin{equation}
\sum_{n=0}^{\infty}h_{n}^{\left(  r\right)  }t^{n}=\frac{-\ln\left(
1-t\right)  }{\left(  1-t\right)  ^{r}}. \label{gfhh}%
\end{equation}
The hyperharmonic numbers are related to the harmonic numbers $H_{n}$\ by
$h_{n}^{\left(  1\right)  }=H_{n}$ and
\begin{equation}
h_{n}^{\left(  r\right)  }=\binom{n+r-1}{n}\left(  H_{n+r-1}-H_{r-1}\right)
\label{hh-h}%
\end{equation}
(cf. \cite{Benjamin,CG}). The harmonic numbers satisfy the \textit{binomial
harmonic identity }\cite{B6}\textit{ }%
\[
H_{n}=\sum_{k=1}^{n}\binom{n}{k}\frac{\left(  -1\right)  ^{k+1}}{k}%
\]
and \textit{symmetric formula}
\begin{equation}
\frac{H_{n}}{n+1}=\sum_{k=1}^{n}\left(  -1\right)  ^{k+1}\binom{n}{k}%
\frac{H_{k}}{k+1}. \label{hsf}%
\end{equation}
The binomial harmonic identity was generalized to binomial hyperharmonic
identity \cite[Corollary 3.1]{Mezo-Dil}
\begin{equation}
h_{n}^{\left(  r\right)  }=\sum_{k=0}^{n}\binom{n}{k}\alpha\left(  k,r\right)
, \label{bhh}%
\end{equation}
with the help of Euler-Siedel matrix method. Here%
\[
\alpha\left(  k,r\right)  =\left\{
\begin{array}
[c]{cc}%
h_{k}^{\left(  r-k\right)  }, & 0\leq k<r\\
\left(  -1\right)  ^{k+\delta_{r}}\left(  r-1\right)  !/k^{\underline{r}}, &
k\geq r
\end{array}
\right.
\]
$\delta_{r}=0$ or $1,$ according to $r$ is even or odd, and $\left(  x\right)
^{\underline{n}}$ is the falling factorial function (see Section 2).

In this paper, we capitalize the operator
\[
\left(  xD+2r\right)  ^{\overline{n}}=\left(  xD+2r\right)  \left(
xD+2r+1\right)  \cdots\left(  xD+2r+n-1\right)  ,
\]
which is the key tool of this study. When it is applied to an appropriate
function $f,$ the coefficients of the resulting function are the $r$-Lah
numbers, namely,\textbf{ }%
\begin{equation}
\left(  xD+2r\right)  ^{\overline{n}}f\left(  x\right)  =\sum_{k=0}^{n}%
%TCIMACRO{\QATOPD{\lfloor}{\rfloor}{n}{k}}%
%BeginExpansion
\genfrac{\lfloor}{\rfloor}{0pt}{}{n}{k}%
%EndExpansion
_{r}x^{k}f^{\left(  k\right)  }\left(  x\right)  .\label{3}%
\end{equation}
Some families of polynomials appear due to the choice of $f$ in (\ref{3}). One
of the these polynomials is the exponential $r$-Lah polynomials (see Section
3). With the help of these polynomials, we rediscover many of the identities
given by Nyul and R\'{a}cz \cite{NyRa} for $r$-Lah numbers. In addition, some
new relations are presented for these numbers. The another arising polynomial
is the geometric $r$-Lah polynomial (see Section 4). We reach a relationship
between geometric $r$-Lah polynomials and hyperharmonic numbers. This
connection gives rise to a new formulation for the binomial hyperharmonic
identity (\ref{bhh}):%
\begin{equation}
h_{n+1}^{\left(  r\right)  }=\sum_{k=0}^{n}\binom{n+r}{k+r}\frac{\left(
-1\right)  ^{k}}{k+1}\nonumber\label{SIL1}%
\end{equation}
(see Theorem \ref{bhh-}). Additionally, we show that Bernoulli polynomials can
be described as a sum of the products of hyperharmonic numbers and
$r$-Stirling numbers of the second kind (see Theorem \ref{17-}). This formula
leads a generalization of some identities, obtained by Pascal matrix method,
presented in \cite{CaDa}. Finally, we examine a polynomial which we call
harmonic geometric $r$-Lah polynomial (see Section 5). These polynomials are
also related to hyperharmonic numbers. Considering this, we infer a generating
function for the hyperharmonic numbers with respect to the upper index. We
then deduce a closed-form evaluation formula for the Euler-type sum
\[
\sum_{m=0}^{\infty}\frac{h_{n}^{\left(  m+1\right)  }}{\left(  m+1\right)
^{q}}x^{m}%
\]
(see Theorem \ref{upi}). It should be noted that the sum in question is over
the upper index, however, the studies on the Euler-type sums containing
hyperharmonic numbers depend on the lower index \cite{DM,DB,Kamano}.
Furthermore, we produce some generalizations of the symmetric formula
(\ref{hsf}) (see Theorems \ref{27-} and \ref{29-}), for instance, one of the
generalizations is
\[
\frac{h_{n}^{\left(  r\right)  }}{\left(  n+1\right)  ^{\overline{r}}}%
=\sum_{k=1}^{n}\left(  -1\right)  ^{k+1}\binom{n}{k}\frac{H_{k}}{\left(
k+1\right)  ^{\overline{r}}},
\]
where $\left(  x\right)  ^{\overline{n}}$ is the rising factorial function
(see Section 2). Moreover, we give formulas for the sum of the products of
hyperharmonic numbers (see Theorem \ref{24-}). Particular cases give rise to
some interesting results, for instance,%
\[
2\sum_{k=1}^{n}\frac{H_{k}}{k+1}=\sum_{k=1}^{n}\frac{H_{k}}{n+1-k}=\left(
H_{n+1}\right)  ^{2}-H_{n+1}^{\left(  2\right)  }%
\]
where $H_{n}^{\left(  2\right)  }=1+\frac{1}{2^{2}}+\cdots+\frac{1}{n^{2}}.$

\section{Preliminaries}

Let $\left(  x\right)  ^{\overline{n}}$ and $\left(  x\right)  ^{\underline{n}%
}$ denote the rising and falling factorial functions defined by
\begin{align*}
\left(  x\right)  ^{\overline{n}}  &  =x\left(  x+1\right)  \cdots\left(
x+n-1\right)  ,\text{ with }\left(  x\right)  ^{\overline{0}}=1,\\
\left(  x\right)  ^{\underline{n}}  &  =x\left(  x-1\right)  \cdots\left(
x-n+1\right)  ,\text{ with }\left(  x\right)  ^{\underline{0}}=1.
\end{align*}
The $r$-Stirling numbers of the first kind $%
%TCIMACRO{\QATOPD{[}{]}{n}{k} }%
%BeginExpansion
\genfrac{[}{]}{0pt}{}{n}{k}
%EndExpansion
_{r}$, the second kind $%
%TCIMACRO{\QATOPD{\{}{\}}{n}{k} }%
%BeginExpansion
\genfrac{\{}{\}}{0pt}{}{n}{k}
%EndExpansion
_{r}$ and the $r$-Lah numbers $%
%TCIMACRO{\QATOPD{\lfloor}{\rfloor}{n}{k} }%
%BeginExpansion
\genfrac{\lfloor}{\rfloor}{0pt}{}{n}{k}
%EndExpansion
_{r}$can be defined by \cite{Broder, NyRa}%
\begin{align}
\left(  x+r\right)  ^{\overline{n}}  &  =\sum_{k=0}^{n}
%TCIMACRO{\QATOPD{[}{]}{n}{k} }%
%BeginExpansion
\genfrac{[}{]}{0pt}{}{n}{k}
%EndExpansion
_{r}x^{k}\text{,}\label{s1}\\
\left(  x+r\right)  ^{n}  &  =\sum_{k=0}^{n}
%TCIMACRO{\QATOPD{\{}{\}}{n}{k} }%
%BeginExpansion
\genfrac{\{}{\}}{0pt}{}{n}{k}
%EndExpansion
_{r}\left(  x\right)  ^{\underline{k}}\text{,} \label{s2}%
\end{align}
and
\begin{equation}
\left(  x+2r\right)  ^{\overline{n}}=\sum_{k=0}^{n}
%TCIMACRO{\QATOPD{\lfloor}{\rfloor}{n}{k} }%
%BeginExpansion
\genfrac{\lfloor}{\rfloor}{0pt}{}{n}{k}
%EndExpansion
_{r}\left(  x\right)  ^{\underline{k}}. \label{s3}%
\end{equation}
Note that $%
%TCIMACRO{\QATOPD{[}{]}{n}{k} }%
%BeginExpansion
\genfrac{[}{]}{0pt}{}{n}{k}
%EndExpansion
=
%TCIMACRO{\QATOPD{[}{]}{n}{k} }%
%BeginExpansion
\genfrac{[}{]}{0pt}{}{n}{k}
%EndExpansion
_{0}$ and $%
%TCIMACRO{\QATOPD{\{}{\}}{n}{k} }%
%BeginExpansion
\genfrac{\{}{\}}{0pt}{}{n}{k}
%EndExpansion
=
%TCIMACRO{\QATOPD{\{}{\}}{n}{k} }%
%BeginExpansion
\genfrac{\{}{\}}{0pt}{}{n}{k}
%EndExpansion
_{0}$ are the Stirling numbers of the first and second kind.

Replace $x$ by $\left(  xD\right)  $ in%
\[
\left(  x\right)  ^{\underline{n}}=\sum_{k=0}^{n}\left(  -1\right)  ^{n-k}
%TCIMACRO{\QATOPD{[}{]}{n}{k} }%
%BeginExpansion
\genfrac{[}{]}{0pt}{}{n}{k}
%EndExpansion
x^{k}
\]
and then apply it\ to a $n$-times differentiable function $f.$ Utilizing
(\ref{02}) and the identity
\[
\sum_{j=k}^{n}\left(  -1\right)  ^{j-k}
%TCIMACRO{\QATOPD{[}{]}{n}{j} }%
%BeginExpansion
\genfrac{[}{]}{0pt}{}{n}{j}
%EndExpansion
%TCIMACRO{\QATOPD{\{}{\}}{j}{k} }%
%BeginExpansion
\genfrac{\{}{\}}{0pt}{}{j}{k}
%EndExpansion
=\left\{
\begin{array}
[c]{cc}%
1, & n=k\\
0, & n\neq k
\end{array}
\right.
\]
we obtain that
\begin{equation}
\left(  xD\right)  ^{\underline{k}}f\left(  x\right)  =x^{k}f^{\left(
k\right)  }\left(  x\right)  . \label{01}%
\end{equation}
Therefore, applying the operator $\left(  xD+2r\right)  ^{\overline{n}}$ to a
$n$-times differentiable function $f$ and using (\ref{s3}) and (\ref{01}), we
obtain (\ref{3}), i.e.,
\[
\left(  xD+2r\right)  ^{\overline{n}}f\left(  x\right)  =\sum_{k=0}^{n}
%TCIMACRO{\QATOPD{\lfloor}{\rfloor}{n}{k} }%
%BeginExpansion
\genfrac{\lfloor}{\rfloor}{0pt}{}{n}{k}
%EndExpansion
_{r}x^{k}f^{\left(  k\right)  }\left(  x\right)  .
\]
By the similar fashion, it follows from (\ref{s2}) and (\ref{01}) that
\begin{equation}
\left(  xD+r\right)  ^{n}f\left(  x\right)  =\sum_{k=0}^{n}
%TCIMACRO{\QATOPD{\{}{\}}{n}{k} }%
%BeginExpansion
\genfrac{\{}{\}}{0pt}{}{n}{k}
%EndExpansion
_{r}x^{k}f^{\left(  k\right)  }\left(  x\right)  . \label{2}%
\end{equation}

We finally want to recall the $r$-Stirling transform which will be useful in
the next sections:%
\begin{align*}
a_{n}  &  =\sum_{k=0}^{n}
%TCIMACRO{\QATOPD{\{}{\}}{n}{k} }%
%BeginExpansion
\genfrac{\{}{\}}{0pt}{}{n}{k}
%EndExpansion
_{r}b_{k}\text{ }\left(  n\geq0\right)  \text{ if and only if }b_{n}%
=\sum_{k=0}^{n}\left(  -1\right)  ^{n-k}
%TCIMACRO{\QATOPD{[}{]}{n}{k} }%
%BeginExpansion
\genfrac{[}{]}{0pt}{}{n}{k}
%EndExpansion
_{r}a_{k}\text{ }\left(  n\geq0\right)  ,\\
a_{n}  &  =\sum_{k=0}^{n}
%TCIMACRO{\QATOPD{[}{]}{n}{k} }%
%BeginExpansion
\genfrac{[}{]}{0pt}{}{n}{k}
%EndExpansion
_{r}b_{k}\text{ }\left(  n\geq0\right)  \text{ if and only if }b_{n}%
=\sum_{k=0}^{n}\left(  -1\right)  ^{n-k}
%TCIMACRO{\QATOPD{\{}{\}}{n}{k} }%
%BeginExpansion
\genfrac{\{}{\}}{0pt}{}{n}{k}
%EndExpansion
_{r}a_{k}\text{ }\left(  n\geq0\right)  .
\end{align*}

\section{Exponential $r$-Lah polynomials}

We first deal with (\ref{3}) for $f\left(  x\right)  =e^{x}$ and present some
identities for arising polynomials. We then rediscover many of the identities
recorded in \cite{NyRa} and present some new relations for $r$-Lah numbers.

It is seen from (\ref{3}) that
\begin{equation}
\left(  xD+2r\right)  ^{\overline{n}}e^{x}=e^{x}\sum_{k=0}^{n}
%TCIMACRO{\QATOPD{\lfloor}{\rfloor}{n}{k} }%
%BeginExpansion
\genfrac{\lfloor}{\rfloor}{0pt}{}{n}{k}
%EndExpansion
_{r}x^{k}=e^{x}\mathcal{L}_{n,r}\left(  x\right)  , \label{5}%
\end{equation}
where
\begin{equation}
\mathcal{L}_{n,r}\left(  x\right)  =\sum_{k=0}^{n}
%TCIMACRO{\QATOPD{\lfloor}{\rfloor}{n}{k} }%
%BeginExpansion
\genfrac{\lfloor}{\rfloor}{0pt}{}{n}{k}
%EndExpansion
_{r}x^{k}, \label{rLp}%
\end{equation}
which we call \textit{exponential }$r$\textit{-Lah polynomials.} In
particular,
\begin{equation}
\left(  xD\right)  ^{\overline{n}}e^{x}=e^{x}\mathcal{L}_{n,0}\left(
x\right)  =e^{x}\mathcal{L}_{n}\left(  x\right)  . \label{5-a}%
\end{equation}
The notation $\mathcal{L}_{n}\left(  x\right)  $ for this polynomial was
firstly used by Guo and Qi in \cite{Guo} and their related papers.

On the other hand, from (\ref{s1}) and (\ref{2}), we have%
\begin{align}
\left(  xD+r+s\right)  ^{\overline{n}}e^{x}  &  =\sum_{j=0}^{n}
%TCIMACRO{\QATOPD{[}{]}{n}{j} }%
%BeginExpansion
\genfrac{[}{]}{0pt}{}{n}{j}
%EndExpansion
_{r}\left(  xD+s\right)  ^{j}e^{x}\nonumber\\
&  =e^{x}\sum_{k=0}^{n}\sum_{j=k}^{n}
%TCIMACRO{\QATOPD{[}{]}{n}{j} }%
%BeginExpansion
\genfrac{[}{]}{0pt}{}{n}{j}
%EndExpansion
_{r}
%TCIMACRO{\QATOPD{\{}{\}}{j}{k} }%
%BeginExpansion
\genfrac{\{}{\}}{0pt}{}{j}{k}
%EndExpansion
_{s}x^{k}.\label{5-5}%
\end{align}
Thus, we find from (\ref{5}) and (\ref{5-5}) that
\[%
%TCIMACRO{\QATOPD{\lfloor}{\rfloor}{n}{k} }%
%BeginExpansion
\genfrac{\lfloor}{\rfloor}{0pt}{}{n}{k}
%EndExpansion
_{\frac{r+s}{2}}=\sum_{j=k}^{n}
%TCIMACRO{\QATOPD{[}{]}{n}{j} }%
%BeginExpansion
\genfrac{[}{]}{0pt}{}{n}{j}
%EndExpansion
_{r}
%TCIMACRO{\QATOPD{\{}{\}}{j}{k} }%
%BeginExpansion
\genfrac{\{}{\}}{0pt}{}{j}{k}
%EndExpansion
_{s},
\]
which is Theorem 3.11(d) of \cite{NyRa}.

\begin{proposition}
For all non-negative integer $n$,
\begin{equation}
\mathcal{L}_{n+2s,r}\left(  x\right)  =\sum_{k=0}^{n}
%TCIMACRO{\QATOPD{\lfloor}{\rfloor}{n}{k} }%
%BeginExpansion
\genfrac{\lfloor}{\rfloor}{0pt}{}{n}{k}
%EndExpansion
_{r+s}x^{k}\mathcal{L}_{2s,r+\frac{k}{2}}\left(  x\right)  . \label{6}%
\end{equation}

\end{proposition}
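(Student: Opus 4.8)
The plan is to interpret both sides through the operator identity (\ref{5}) and to exploit the multiplicativity of the rising factorial. First I would write the left-hand side operationally as
\[
\mathcal{L}_{n+2s,r}(x)\,e^{x}=(xD+2r)^{\overline{n+2s}}e^{x},
\]
and then factor the rising-factorial operator. Since every factor $xD+2r+j$ is a polynomial in the single operator $xD$, these factors commute with one another, and the elementary identity $(a)^{\overline{n+2s}}=(a)^{\overline{2s}}(a+2s)^{\overline{n}}$ gives
\[
(xD+2r)^{\overline{n+2s}}=(xD+2r)^{\overline{2s}}\,(xD+2(r+s))^{\overline{n}}.
\]
Applying the inner operator to $e^{x}$ via (\ref{5}) (with $r$ replaced by $r+s$) yields $(xD+2(r+s))^{\overline{n}}e^{x}=e^{x}\mathcal{L}_{n,r+s}(x)$, so after expanding $\mathcal{L}_{n,r+s}$ through (\ref{rLp}) the problem reduces, by linearity, to computing $(xD+2r)^{\overline{2s}}(x^{k}e^{x})$ for each $k$.

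The crux is a shift (conjugation) lemma for the Mellin operator. A direct computation gives $(xD)x^{k}f=x^{k}(xD+k)f$, i.e. $x^{-k}(xD)x^{k}=xD+k$; inserting $x^{k}x^{-k}=1$ between the successive factors of the rising factorial then upgrades this to
\[
(xD+2r)^{\overline{2s}}x^{k}=x^{k}\,(xD+2r+k)^{\overline{2s}}.
\]
This is the step I expect to be the main obstacle, since it is precisely where the shift in the Lah parameter from $r$ to $r+\tfrac{k}{2}$ is produced; everything else is bookkeeping. Writing $2r+k=2\bigl(r+\tfrac{k}{2}\bigr)$ and applying (\ref{5}) once more gives $\bigl(xD+2(r+\tfrac{k}{2})\bigr)^{\overline{2s}}e^{x}=e^{x}\mathcal{L}_{2s,r+\frac{k}{2}}(x)$, whence $(xD+2r)^{\overline{2s}}(x^{k}e^{x})=x^{k}e^{x}\mathcal{L}_{2s,r+\frac{k}{2}}(x)$.

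Finally I would reassemble the pieces: substituting this last evaluation back into the expanded sum gives
\[
\mathcal{L}_{n+2s,r}(x)\,e^{x}=\sum_{k=0}^{n}\genfrac{\lfloor}{\rfloor}{0pt}{}{n}{k}_{r+s}\,x^{k}e^{x}\,\mathcal{L}_{2s,r+\frac{k}{2}}(x),
\]
and cancelling the common factor $e^{x}$ yields (\ref{6}). The whole argument is purely operator-algebraic and avoids any direct manipulation of the $r$-Lah numbers; the only two inputs are the defining identity (\ref{5}) and the commutation relation $(xD)x^{k}=x^{k}(xD+k)$, so once the conjugation lemma is in place the proposition follows immediately.
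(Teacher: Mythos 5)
Your proof is correct and follows essentially the same route as the paper: the same factorization $(xD+2r)^{\overline{n+2s}}=(xD+2r)^{\overline{2s}}(xD+2(r+s))^{\overline{n}}$, application of the inner operator to $e^{x}$, and reduction to the key evaluation $(xD+2r)^{\overline{2s}}\left(x^{k}e^{x}\right)=x^{k}e^{x}\mathcal{L}_{2s,r+k/2}\left(x\right)$, which is exactly the paper's (\ref{5-2}). The only cosmetic difference is in how this lemma is justified: you use the conjugation relation $(xD)x^{k}=x^{k}(xD+k)$, whereas the paper expands $e^{x}$ in its Taylor series and uses $(xD+2r)^{\overline{n}}x^{m}=(m+2r)^{\overline{n}}x^{m}$; the two computations are equivalent.
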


\begin{proof}
Appealing to (\ref{5}) and noting that $a^{\overline{m+n}}=a^{\overline{m}%
}\left(  a+m\right)  ^{\overline{n}},$ we see that%
\begin{align*}
e^{x}\mathcal{L}_{n+2s,r}\left(  x\right)   &  =\left(  xD+2r\right)
^{\overline{2s}}\left(  xD+2r+2s\right)  ^{\overline{n}}e^{x}\\
&  =\left(  xD+2r\right)  ^{\overline{2s}}e^{x}\mathcal{L}_{n,r+s}\left(
x\right) \\
&  \overset{\text{(\ref{rLp})}}{=}\sum_{k=0}^{n}
%TCIMACRO{\QATOPD{\lfloor}{\rfloor}{n}{k} }%
%BeginExpansion
\genfrac{\lfloor}{\rfloor}{0pt}{}{n}{k}
%EndExpansion
_{r+s}\left(  xD+2r\right)  ^{\overline{2s}}\left(  x^{k}e^{x}\right)  .
\end{align*}
Using the Taylor expression of $e^{x}$ in (\ref{5}) and considering that
$\left(  xD+2r\right)  ^{\overline{n}}x^{k}=\left(  k+2r\right)
^{\overline{n}}x^{k},$ we obtain
\begin{equation}
\left(  xD+2r\right)  ^{\overline{n}}\left(  x^{k}e^{x}\right)  =x^{k}%
e^{x}\mathcal{L}_{n,r+k/2}x, \label{5-2}%
\end{equation}
which completes the proof.
\end{proof}

Comparing the coefficients of $x^{k}$ in (\ref{6}), with the use of
(\ref{rLp}), yields to the following relation:

\begin{corollary}%
\[%
%TCIMACRO{\QATOPD{\lfloor}{\rfloor}{n+2s}{m} }%
%BeginExpansion
\genfrac{\lfloor}{\rfloor}{0pt}{}{n+2s}{m}
%EndExpansion
_{r}=\sum_{k=0}^{m}
%TCIMACRO{\QATOPD{\lfloor}{\rfloor}{n}{k} }%
%BeginExpansion
\genfrac{\lfloor}{\rfloor}{0pt}{}{n}{k}
%EndExpansion
_{r+s}
%TCIMACRO{\QATOPD{\lfloor}{\rfloor}{2s}{m-k} }%
%BeginExpansion
\genfrac{\lfloor}{\rfloor}{0pt}{}{2s}{m-k}
%EndExpansion
_{r+\frac{k}{2}}.
\]

\end{corollary}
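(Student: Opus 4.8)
The plan is to treat both sides of (\ref{6}) as polynomials in $x$, expand everything through the defining relation (\ref{rLp}), and then read off and equate the coefficients of $x^{m}$. No tool beyond (\ref{rLp}) is needed, so the argument is purely a matter of bookkeeping.

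First I would handle the left-hand side. Directly from (\ref{rLp}),
\[
\mathcal{L}_{n+2s,r}\left(  x\right)  =\sum_{m=0}^{n+2s}
\genfrac{\lfloor}{\rfloor}{0pt}{}{n+2s}{m}_{r}x^{m},
\]
so the coefficient of $x^{m}$ on the left is exactly $\genfrac{\lfloor}{\rfloor}{0pt}{}{n+2s}{m}_{r}$, the quantity to be evaluated.

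Next I would expand the right-hand side of (\ref{6}). Applying (\ref{rLp}) to the factor $\mathcal{L}_{2s,r+\frac{k}{2}}\left(  x\right)$ and substituting into (\ref{6}) turns the right-hand side into the double sum
\[
\sum_{k=0}^{n}\genfrac{\lfloor}{\rfloor}{0pt}{}{n}{k}_{r+s}x^{k}
\sum_{j=0}^{2s}\genfrac{\lfloor}{\rfloor}{0pt}{}{2s}{j}_{r+\frac{k}{2}}x^{j}
=\sum_{k=0}^{n}\sum_{j=0}^{2s}
\genfrac{\lfloor}{\rfloor}{0pt}{}{n}{k}_{r+s}
\genfrac{\lfloor}{\rfloor}{0pt}{}{2s}{j}_{r+\frac{k}{2}}x^{k+j}.
\]
Collecting the terms with $k+j=m$, that is, putting $j=m-k$, the coefficient of $x^{m}$ becomes $\sum_{k=0}^{m}\genfrac{\lfloor}{\rfloor}{0pt}{}{n}{k}_{r+s}\genfrac{\lfloor}{\rfloor}{0pt}{}{2s}{m-k}_{r+\frac{k}{2}}$. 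Equating the two expressions for the coefficient of $x^{m}$ then gives the stated identity.

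The only delicate point, and the closest thing to an obstacle, is the bookkeeping on the summation limits: in the double sum the inner index satisfies $0\le j\le 2s$, so after the substitution $j=m-k$ one must confirm that writing the outer sum over the full range $0\le k\le m$ introduces no spurious terms. This is harmless, because $\genfrac{\lfloor}{\rfloor}{0pt}{}{n}{k}_{r+s}$ vanishes for $k>n$ and $\genfrac{\lfloor}{\rfloor}{0pt}{}{2s}{m-k}_{r+\frac{k}{2}}$ vanishes whenever $m-k<0$ or $m-k>2s$; hence only the admissible pairs contribute and the limits $0\le k\le m$ are precisely the correct ones.
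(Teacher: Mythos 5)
Your proof is correct and follows exactly the paper's own route: the corollary is obtained by expanding both sides of (\ref{6}) through the defining expansion (\ref{rLp}) and equating the coefficients of $x^{m}$. Your explicit check that the outer sum may be written over $0\le k\le m$ (since $\genfrac{\lfloor}{\rfloor}{0pt}{}{n}{k}_{r+s}$ vanishes for $k>n$ and $\genfrac{\lfloor}{\rfloor}{0pt}{}{2s}{m-k}_{r+\frac{k}{2}}$ vanishes for $m-k>2s$) is a detail the paper leaves implicit, but the argument is the same.
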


Multiplying both sides of (\ref{rLp}) with $t^{n}/n!$ and then summing over
$n$ give the following generating function for $\mathcal{L}_{n,r}\left(
x\right)  :$%
\begin{equation}
\frac{1}{\left(  1-t\right)  ^{2r}}e^{x\frac{t}{1-t}}=\sum_{n=0}^{\infty
}\mathcal{L}_{n,r}\left(  x\right)  \frac{t^{n}}{n!}. \label{rLpgf}%
\end{equation}
It is easily seen from (\ref{rLpgf}) that
\begin{align}
\mathcal{L}_{n,r+s}\left(  x+y\right)   &  =\sum_{k=0}^{n}\binom{n}%
{k}\mathcal{L}_{k,r}\left(  x\right)  \mathcal{L}_{n-k,s}\left(  y\right)
,\nonumber\\
\mathcal{L}_{n,r+s}\left(  x\right)   &  =\sum_{k=0}^{n}\binom{n}{k}\left(
2s\right)  ^{\overline{n-k}}\mathcal{L}_{k,r}\left(  x\right)  . \label{14-1}%
\end{align}
As a consequence of (\ref{14-1}), we have (cf. \cite[Theorem 3.4]{NyRa})
\begin{align*}%
%TCIMACRO{\QATOPD{\lfloor}{\rfloor}{n}{l} }%
%BeginExpansion
\genfrac{\lfloor}{\rfloor}{0pt}{}{n}{l}
%EndExpansion
_{r+s}  &  =\sum_{k=l}^{n}\binom{n}{k}
%TCIMACRO{\QATOPD{\lfloor}{\rfloor}{k}{l} }%
%BeginExpansion
\genfrac{\lfloor}{\rfloor}{0pt}{}{k}{l}
%EndExpansion
\left(  2r+2s\right)  ^{\overline{n-k}}\\
&  =\sum_{k=l}^{n}\binom{n}{k}
%TCIMACRO{\QATOPD{\lfloor}{\rfloor}{k}{l} }%
%BeginExpansion
\genfrac{\lfloor}{\rfloor}{0pt}{}{k}{l}
%EndExpansion
_{r}\left(  2s\right)  ^{\overline{n-k}}.
\end{align*}

Let us continue by differentiating both sides of (\ref{rLpgf}) with respect to
$x.$ Taking $m$ times derivative, we see that
\begin{equation}
\frac{d^{m}}{dx^{m}}\mathcal{L}_{n,r}\left(  x\right)  =n^{\underline{m}%
}\mathcal{L}_{n-m,r+\frac{m}{2}}\left(  x\right)  . \label{7-1}%
\end{equation}
On the other hand, we have
\begin{align*}
\sum_{n=0}^{\infty}\frac{d}{dx}\mathcal{L}_{n,r}\left(  x\right)  \frac{t^{n}%
}{n!}  &  =t\sum_{n=0}^{\infty}t^{n}\sum_{n=0}^{\infty}\mathcal{L}%
_{n,r}\left(  x\right)  \frac{t^{n}}{n!}\\
&  =\sum_{n=1}^{\infty}\sum_{k=0}^{n-1}\frac{\mathcal{L}_{k,r}\left(
x\right)  }{k!}t^{n}%
\end{align*}
and then, by (\ref{7-1}),%
\[
\frac{1}{n!}\mathcal{L}_{n,r+\frac{1}{2}}\left(  x\right)  =\sum_{k=0}%
^{n}\frac{\mathcal{L}_{k,r}\left(  x\right)  }{k!}.
\]
Therefore, (\ref{7-1}) implies that%
\[
\frac{1}{n!}n^{\underline{m}}\mathcal{L}_{n-m,r+\frac{m+1}{2}}\left(
x\right)  =\sum_{k=0}^{n-m}\frac{\left(  k+m\right)  ^{\underline{m}}}{\left(
k+m\right)  !}\mathcal{L}_{k,r+\frac{m}{2}}\left(  x\right)  ,
\]
which can be stated as%
\begin{equation}
\frac{1}{n!}\mathcal{L}_{n,r+\frac{m+1}{2}}\left(  x\right)  =\sum_{k=0}%
^{n}\frac{1}{k!}\mathcal{L}_{k,r+\frac{m}{2}}\left(  x\right)  \label{10-}%
\end{equation}
by substituting $n\rightarrow n+m.$ We now compare the coefficients of $x$ to
deduce
\begin{equation}
\binom{n+m}{n}
%TCIMACRO{\QATOPD{\lfloor}{\rfloor}{n}{k} }%
%BeginExpansion
\genfrac{\lfloor}{\rfloor}{0pt}{}{n}{k}
%EndExpansion
_{r+\frac{m}{2}}=
%TCIMACRO{\QATOPD{\lfloor}{\rfloor}{n+m}{k+m} }%
%BeginExpansion
\genfrac{\lfloor}{\rfloor}{0pt}{}{n+m}{k+m}
%EndExpansion
_{r}\binom{k+m}{m}\label{7}%
\end{equation}
and
\[
\frac{1}{n!}
%TCIMACRO{\QATOPD{\lfloor}{\rfloor}{n}{l} }%
%BeginExpansion
\genfrac{\lfloor}{\rfloor}{0pt}{}{n}{l}
%EndExpansion
_{r+\frac{m+1}{2}}=\sum_{k=l}^{n}\frac{1}{k!}
%TCIMACRO{\QATOPD{\lfloor}{\rfloor}{k}{l} }%
%BeginExpansion
\genfrac{\lfloor}{\rfloor}{0pt}{}{k}{l}
%EndExpansion
_{r+\frac{m}{2}},
\]
from (\ref{7-1}) and (\ref{10-}), respectively. Thus, we arrive at the following:

\begin{proposition}
For all non-negative integers $n$ and $m,$%
\begin{equation}
\frac{1}{\left(  n+m+1\right)  !}
%TCIMACRO{\QATOPD{\lfloor}{\rfloor}{n+m+1}{l+m+1} }%
%BeginExpansion
\genfrac{\lfloor}{\rfloor}{0pt}{}{n+m+1}{l+m+1}
%EndExpansion
_{r}=\sum_{k=l}^{n}\frac{1}{\left(  k+m\right)  !}
%TCIMACRO{\QATOPD{\lfloor}{\rfloor}{k+m}{l+m} }%
%BeginExpansion
\genfrac{\lfloor}{\rfloor}{0pt}{}{k+m}{l+m}
%EndExpansion
_{r}. \label{10}%
\end{equation}

\end{proposition}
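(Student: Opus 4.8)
The plan is to combine the two facts already established immediately above the statement. The first is the reduction formula (\ref{7}), which expresses an $r$-Lah number carrying a half-integer shift in terms of an ordinary $r$-Lah number; the second is the coefficient comparison of (\ref{10-}), namely $\frac{1}{n!}\genfrac{\lfloor}{\rfloor}{0pt}{}{n}{l}_{r+\frac{m+1}{2}}=\sum_{k=l}^{n}\frac{1}{k!}\genfrac{\lfloor}{\rfloor}{0pt}{}{k}{l}_{r+\frac{m}{2}}$. Both sides of this relation still carry the fractional parameters $r+\tfrac{m+1}{2}$ and $r+\tfrac{m}{2}$, and the whole point of (\ref{7}) is that it trades each such fractional parameter for the integer parameter $r$ at the cost of raising both indices (by $m+1$ on the left, by $m$ on the right). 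Eliminating the fractional shifts on both sides should therefore convert the relation into a statement about ordinary $r$-Lah numbers, which is precisely the shape of (\ref{10}).

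Concretely, I would invoke (\ref{7}) twice. On the left I apply it with the parameter $m$ of (\ref{7}) replaced by $m+1$, rewriting $\genfrac{\lfloor}{\rfloor}{0pt}{}{n}{l}_{r+\frac{m+1}{2}}$ as $\genfrac{\lfloor}{\rfloor}{0pt}{}{n+m+1}{l+m+1}_{r}$ times the binomial ratio $\binom{l+m+1}{m+1}/\binom{n+m+1}{n}$. On the right I apply it to each summand with parameter $m$, rewriting $\genfrac{\lfloor}{\rfloor}{0pt}{}{k}{l}_{r+\frac{m}{2}}$ as $\genfrac{\lfloor}{\rfloor}{0pt}{}{k+m}{l+m}_{r}$ times $\binom{l+m}{m}/\binom{k+m}{k}$. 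Expanding every binomial via $\binom{a+b}{a}=(a+b)!/(a!\,b!)$ and merging the prefactors $1/n!$ and $1/k!$, the $(m+1)!$ and $m!$ cancel; what survives are the factorial weights $1/(n+m+1)!$ and $1/(k+m)!$ of (\ref{10}), together with the numerator factors $(l+m+1)!/l!$ on the left and $(l+m)!/l!$ inside the sum on the right, whose quotient must then be simplified.

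The one place that needs real care is exactly this factorial bookkeeping: after the common factor $1/l!$ and the $(l+m)!$ are cancelled across the identity, one must verify whether the two numerator binomials cancel cleanly or whether they leave a small index-dependent factor (here a factor $l+m+1$) that has to be reconciled with the form of (\ref{10}). A safe way to calibrate the normalisation is the boundary case $l=m=0$, where $\genfrac{\lfloor}{\rfloor}{0pt}{}{k}{0}_{r}=(2r)^{\overline{k}}$ and the identity collapses to the hockey-stick sum $\sum_{k=0}^{n}\binom{2r+k-1}{k}=\binom{2r+n}{n}$; checking this pins down the constants. Beyond this simplification the argument needs no induction or generating-function manipulation, since (\ref{7}) and the coefficient comparison of (\ref{10-}) are both already in hand and (\ref{10}) follows from them by pure rearrangement.
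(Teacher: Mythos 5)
You follow the paper's own route (combine (\ref{7}) with the coefficient comparison of (\ref{10-})), and your instinct that the factorial bookkeeping is the crux is exactly right --- but the factor you flag cannot be ``reconciled'': it survives, and it falsifies the statement as printed. Carrying your substitution through precisely, the left-hand side becomes $\frac{(l+m+1)!}{l!\,(n+m+1)!}\genfrac{\lfloor}{\rfloor}{0pt}{}{n+m+1}{l+m+1}_{r}$, each summand becomes $\frac{(l+m)!}{l!\,(k+m)!}\genfrac{\lfloor}{\rfloor}{0pt}{}{k+m}{l+m}_{r}$, and after cancelling $(l+m)!/l!$ what you actually obtain is
\[
\frac{l+m+1}{\left(  n+m+1\right)  !}\genfrac{\lfloor}{\rfloor}{0pt}{}{n+m+1}{l+m+1}_{r}=\sum_{k=l}^{n}\frac{1}{\left(  k+m\right)  !}\genfrac{\lfloor}{\rfloor}{0pt}{}{k+m}{l+m}_{r}.
\]
This corrected identity is the true one: setting $N=n+m$, $L=l+m$ and using (\ref{rLb}), it is equivalent to the hockey-stick identity $\sum_{K=L}^{N}\binom{K+2r-1}{L+2r-1}=\binom{N+2r}{L+2r}$. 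The printed (\ref{10}), by contrast, already fails at $n=1$, $m=0$, $l=1$, $r=1$: by (\ref{rLb}) its left side is $\frac{1}{2!}\genfrac{\lfloor}{\rfloor}{0pt}{}{2}{2}_{1}=\frac{1}{2}$, while its right side is $\frac{1}{1!}\genfrac{\lfloor}{\rfloor}{0pt}{}{1}{1}_{1}=1$. (The paper commits exactly this slip in passing from its two displayed formulas to the Proposition; the printed statement is correct only when $l=m=0$, and the Corollary obtained from it by summation inherits the defect.)

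The concrete gap in your proposal is therefore the calibration step. You propose to pin down the normalisation by checking the boundary case $l=m=0$, but that is precisely the unique situation in which the suspect factor $l+m+1$ equals $1$; the hockey-stick check at $l=m=0$ is satisfied by both the printed identity and the corrected one, so it cannot distinguish them. Any test with $l+m\geq 1$, such as the one above, settles the question the other way. In short: your derivation, completed honestly, proves the corrected identity displayed above, which is the statement the paper's own argument actually establishes; as a proof of (\ref{10}) exactly as stated it cannot be completed, because that statement is false.
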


\begin{corollary}
We have%
\begin{equation}
\sum_{k=l}^{p}\frac{k}{\left(  k+m\right)  !}
%TCIMACRO{\QATOPD{\lfloor}{\rfloor}{k+m}{l+m} }%
%BeginExpansion
\genfrac{\lfloor}{\rfloor}{0pt}{}{k+m}{l+m}
%EndExpansion
_{r}=\frac{p+1}{\left(  p+m+1\right)  !}
%TCIMACRO{\QATOPD{\lfloor}{\rfloor}{p+m+1}{l+m+1} }%
%BeginExpansion
\genfrac{\lfloor}{\rfloor}{0pt}{}{p+m+1}{l+m+1}
%EndExpansion
_{r}-\frac{1}{\left(  p+m+2\right)  !}
%TCIMACRO{\QATOPD{\lfloor}{\rfloor}{p+m+2}{l+m+2} }%
%BeginExpansion
\genfrac{\lfloor}{\rfloor}{0pt}{}{p+m+2}{l+m+2}
%EndExpansion
_{r} \label{10-1}%
\end{equation}
and
\begin{align*}
\sum_{k=l}^{p}\frac{k^{2}}{\left(  k+m\right)  !}
%TCIMACRO{\QATOPD{\lfloor}{\rfloor}{k+m}{l+m} }%
%BeginExpansion
\genfrac{\lfloor}{\rfloor}{0pt}{}{k+m}{l+m}
%EndExpansion
_{r}  &  =\frac{\left(  p+1\right)  ^{2}}{\left(  p+m+1\right)  !}
%TCIMACRO{\QATOPD{\lfloor}{\rfloor}{p+m+1}{l+m+1} }%
%BeginExpansion
\genfrac{\lfloor}{\rfloor}{0pt}{}{p+m+1}{l+m+1}
%EndExpansion
_{r}-\frac{2p+3}{\left(  p+m+2\right)  !}
%TCIMACRO{\QATOPD{\lfloor}{\rfloor}{p+m+2}{l+m+2} }%
%BeginExpansion
\genfrac{\lfloor}{\rfloor}{0pt}{}{p+m+2}{l+m+2}
%EndExpansion
_{r}\\
&  \quad+\frac{1}{\left(  p+m+3\right)  !}
%TCIMACRO{\QATOPD{\lfloor}{\rfloor}{p+m+3}{l+m+3} }%
%BeginExpansion
\genfrac{\lfloor}{\rfloor}{0pt}{}{p+m+3}{l+m+3}
%EndExpansion
_{r}.
\end{align*}

\end{corollary}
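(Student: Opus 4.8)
The plan is to read the summation identity (\ref{10}) as a discrete antiderivative and then apply summation by parts. Abbreviate the summand by $u_k=\frac{1}{(k+m)!}\genfrac{\lfloor}{\rfloor}{0pt}{}{k+m}{l+m}_{r}$ and let $U_n=\sum_{k=l}^{n}u_k$ denote its partial sums; by (\ref{10}) these are available in closed form, $U_n=\frac{1}{(n+m+1)!}\genfrac{\lfloor}{\rfloor}{0pt}{}{n+m+1}{l+m+1}_{r}$, and $u_k=U_k-U_{k-1}$ under the convention $U_{l-1}=0$.

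For the first identity I would use Abel summation with the weight $a_k=k$. Since $a_{k+1}-a_k=1$,
\[
\sum_{k=l}^{p}k\,u_k=p\,U_p-\sum_{k=l}^{p-1}U_k .
\]
The term $p\,U_p$ is already closed by (\ref{10}). Each summand $U_k$ in the remaining sum has precisely the form required in (\ref{10}), but with $m$ replaced by $m+1$ (the lower entry $l+m+1$ playing the role of $l'+m'$ with $l'=l$, and the sum still starting at $k=l$); hence a second application of (\ref{10}) collapses $\sum_{k=l}^{p-1}U_k$ to a single $r$-Lah number of upper index $p+m+1$. Placing the two contributions over the common factorial denominator then produces the claimed right-hand side.

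For the second identity the same device is iterated once more. Abel summation with $a_k=k^{2}$ gives $a_{k+1}-a_k=2k+1$, whence
\[
\sum_{k=l}^{p}k^{2}u_k=p^{2}U_p-2\sum_{k=l}^{p-1}kU_k-\sum_{k=l}^{p-1}U_k .
\]
The last sum is handled exactly as before, while $\sum_{k=l}^{p-1}kU_k$ is nothing but the first (linear) identity applied to the shifted sequence $U_k$; this invokes (\ref{10}) a third time, now with $m$ advanced to $m+2$. Assembling the three resulting $r$-Lah terms and reducing the polynomial factors in $p$ gives the quadratic formula.

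The work that actually needs care is the index bookkeeping rather than any single clever step: every use of (\ref{10}) advances $m\mapsto m+1$ and simultaneously raises the lower $r$-Lah entry, so one must carry the three levels $m,m+1,m+2$ (with the matching shifts of $l$) consistently and respect the Abel boundary conventions $U_{l-1}=0$ and the truncation at $p-1$. Once the indices are aligned, bringing everything over the common denominator $(p+m+3)!$ and simplifying the numerators down to the coefficients $p+1$, $2p+3$ and $(p+1)^{2}$ is routine. Alternatively, both identities can be established by induction on $p$, using the single-step instance of (\ref{10}) in the inductive step, but the summation-by-parts route is shorter and makes transparent where each term originates.
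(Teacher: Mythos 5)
Your proposal is correct and is essentially the paper's own proof: the paper derives (\ref{10-1}) by summing (\ref{10}) over $n$ and interchanging the resulting double sum, which is exactly your Abel-summation identity $\sum_{k=l}^{p}ku_{k}=(p+1)U_{p}-\sum_{k=l}^{p}U_{k}$ read in reverse, and it obtains the quadratic identity by summing (\ref{10-1}) over $p$, just as you reduce the weight $k^{2}$ to the linear case. One cosmetic caveat: collapsing $\sum_{k=l}^{p-1}U_{k}$ directly by (\ref{10}) gives $\frac{1}{(p+m+1)!}\genfrac{\lfloor}{\rfloor}{0pt}{}{p+m+1}{l+m+2}_{r}$, which matches the stated right-hand side only after one further application of the single-step case of (\ref{10}); summing $U_{k}$ up to $k=p$ and subtracting $U_{p}$ instead produces the stated form at once.
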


\begin{proof}
Summing both sides of (\ref{10}) over $n,$ we find that%
\begin{align*}
\frac{1}{\left(  p+m+2\right)  !}
%TCIMACRO{\QATOPD{\lfloor}{\rfloor}{p+m+2}{l+m+2} }%
%BeginExpansion
\genfrac{\lfloor}{\rfloor}{0pt}{}{p+m+2}{l+m+2}
%EndExpansion
_{r}  &  =\sum_{k=l}^{p}\sum_{n=k}^{p}\frac{1}{\left(  k+m\right)  !}
%TCIMACRO{\QATOPD{\lfloor}{\rfloor}{k+m}{l+m} }%
%BeginExpansion
\genfrac{\lfloor}{\rfloor}{0pt}{}{k+m}{l+m}
%EndExpansion
_{r}\\
&  =\frac{p+1}{\left(  p+m+2\right)  !}
%TCIMACRO{\QATOPD{\lfloor}{\rfloor}{p+m+1}{l+m+1} }%
%BeginExpansion
\genfrac{\lfloor}{\rfloor}{0pt}{}{p+m+1}{l+m+1}
%EndExpansion
_{r}-\sum_{k=l}^{p}\frac{k}{\left(  k+m\right)  !}
%TCIMACRO{\QATOPD{\lfloor}{\rfloor}{k+m}{l+m} }%
%BeginExpansion
\genfrac{\lfloor}{\rfloor}{0pt}{}{k+m}{l+m}
%EndExpansion
_{r},
\end{align*}
which is (\ref{10-1}). The second relation follows from (\ref{10-1}) by
summing over $p.$
\end{proof}

The following proposition offers formula that is analogue to a familiar
Leibniz rule for higher derivatives of the product of two functions.

\begin{proposition}
For $n$-times differentiable functions $f$ and $g,$
\begin{equation}
\left(  xD+2r\right)  ^{\overline{n}}\left[  f\left(  x\right)  g\left(
x\right)  \right]  =\sum_{k=0}^{n}\binom{n}{k}\left[  \left(  xD\right)
^{\underline{k}}f\left(  x\right)  \right]  \left[  \left(  xD+2r+k\right)
^{\overline{n-k}}g\left(  x\right)  \right]  . \label{8}%
\end{equation}

\end{proposition}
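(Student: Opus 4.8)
The plan is to reduce both sides of (\ref{8}) to the common ``normal form'' $\sum_{i,j} c_{ij}\,x^{i+j} f^{(i)}(x)g^{(j)}(x)$ and then match coefficients. First I would expand the left-hand side with (\ref{3}), writing $(xD+2r)^{\overline{n}}[fg]=\sum_{m=0}^{n}\genfrac{\lfloor}{\rfloor}{0pt}{}{n}{m}_{r}x^{m}(fg)^{(m)}$, and then apply the ordinary Leibniz rule $(fg)^{(m)}=\sum_{i+j=m}\binom{m}{i}f^{(i)}g^{(j)}$. Collecting terms by the pair $(i,j)$ gives
\[
(xD+2r)^{\overline{n}}[fg]=\sum_{i+j\le n}\genfrac{\lfloor}{\rfloor}{0pt}{}{n}{i+j}_{r}\binom{i+j}{i}\,x^{i+j}f^{(i)}g^{(j)}.
\]

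Next I would bring the right-hand side into the same form. By (\ref{01}), $(xD)^{\underline{k}}f=x^{k}f^{(k)}$; and since $2r+k=2\left(r+\tfrac{k}{2}\right)$, applying (\ref{3}) (with $r$ replaced by $r+\tfrac{k}{2}$ and $n$ by $n-k$) gives $(xD+2r+k)^{\overline{n-k}}g=\sum_{l=0}^{n-k}\genfrac{\lfloor}{\rfloor}{0pt}{}{n-k}{l}_{r+k/2}x^{l}g^{(l)}$. Substituting these into the right-hand side of (\ref{8}) yields
\[
\sum_{k+l\le n}\binom{n}{k}\genfrac{\lfloor}{\rfloor}{0pt}{}{n-k}{l}_{r+k/2}\,x^{k+l}f^{(k)}g^{(l)}.
\]
Comparing with the normal form of the left-hand side, the identity (\ref{8}) is therefore equivalent to the purely combinatorial statement $\binom{n}{i}\genfrac{\lfloor}{\rfloor}{0pt}{}{n-i}{j}_{r+i/2}=\binom{i+j}{i}\genfrac{\lfloor}{\rfloor}{0pt}{}{n}{i+j}_{r}$ for all $i,j$ with $i+j\le n$. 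This is exactly (\ref{7}): setting the $(n,m,k)$ of (\ref{7}) equal to $(n-i,\,i,\,j)$ and using $\binom{n}{n-i}=\binom{n}{i}$ reproduces it verbatim.

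The only genuine subtlety is the legitimacy of comparing coefficients of $f^{(i)}g^{(j)}$, since for a single fixed pair $f,g$ their derivatives need not be independent. I would justify this by noting that (\ref{8}) must hold for every $n$-times differentiable pair $f,g$; testing on $f(x)=e^{ax}$, $g(x)=e^{bx}$ turns both sides into $e^{(a+b)x}\sum_{i,j}c_{ij}x^{i+j}a^{i}b^{j}$, and equality of these polynomials in the independent indeterminates $a,b$ forces the coefficients to agree for each fixed $x$. With that justification the reindexing of (\ref{7}) is routine. As a fully self-contained alternative that avoids coefficient comparison, one can argue by induction on $n$, peeling off a factor via $(xD+2r)^{\overline{n+1}}=(xD+2r)\,(xD+2r+1)^{\overline{n}}$, applying the inductive hypothesis (with $r\mapsto r+\tfrac{1}{2}$, since $2r+1=2(r+\tfrac12)$) to the inner operator, and then using the operator product rule $(xD+2r)[FG]=[(xD)F]\,G+F\,[(xD+2r)G]$ together with the shifts $(xD)(xD)^{\underline{k}}=(xD)^{\underline{k+1}}+k(xD)^{\underline{k}}$ and $(xD+2r)(xD+2r+1+k)^{\overline{n-k}}=(xD+2r+k)^{\overline{n+1-k}}-k(xD+2r+1+k)^{\overline{n-k}}$; the two $k$-multiplied cross terms cancel and Pascal's rule $\binom{n}{k}+\binom{n}{k-1}=\binom{n+1}{k}$ closes the induction. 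I expect the coefficient-identification step to be the main obstacle in the first approach, and the term cancellation to be the one place requiring care in the inductive approach.
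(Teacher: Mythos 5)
Your first argument is correct and is essentially the paper's own proof: both expand the left-hand side via (\ref{3}) and the Leibniz rule, invoke (\ref{7}) (with exactly the reindexing you describe) to transform the coefficients, and reassemble the result using (\ref{01}) and (\ref{3}). Your worry about the legitimacy of comparing coefficients is superfluous for the direction actually needed---since (\ref{7}) is already known, the two expansions are termwise equal, which immediately yields (\ref{8}) without any independence argument.
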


\begin{proof}
It follows from (\ref{3}) and (\ref{7}) that%
\begin{align*}
\left(  xD+2r\right)  ^{\overline{n}}\left[  f\left(  x\right)  g\left(
x\right)  \right]   &  =\sum_{l=0}^{n}
%TCIMACRO{\QATOPD{\lfloor}{\rfloor}{n}{l} }%
%BeginExpansion
\genfrac{\lfloor}{\rfloor}{0pt}{}{n}{l}
%EndExpansion
_{r}x^{l}\left[  f\left(  x\right)  g\left(  x\right)  \right]  ^{\left(
l\right)  }\\
&  =\sum_{k=0}^{n}x^{k}f^{\left(  k\right)  }\left(  x\right)  \sum
_{l=0}^{n-k}
%TCIMACRO{\QATOPD{\lfloor}{\rfloor}{n-k+k}{l+k} }%
%BeginExpansion
\genfrac{\lfloor}{\rfloor}{0pt}{}{n-k+k}{l+k}
%EndExpansion
_{r}\binom{l+k}{k}x^{l}g^{\left(  l\right)  }\left(  x\right) \\
&  =\sum_{k=0}^{n}\binom{n}{k}x^{k}f^{\left(  k\right)  }\left(  x\right)
\sum_{l=0}^{n-k}
%TCIMACRO{\QATOPD{\lfloor}{\rfloor}{n-k}{l} }%
%BeginExpansion
\genfrac{\lfloor}{\rfloor}{0pt}{}{n-k}{l}
%EndExpansion
_{r+k/2}x^{l}g^{\left(  l\right)  }\left(  x\right)  .
\end{align*}
Hence, (\ref{3}) and (\ref{01}) give (\ref{8}).
\end{proof}

Taking $f\left(  x\right)  =x^{m}$ and $g\left(  x\right)  =e^{x}$ in
(\ref{8}), it is seen that
\[
\left(  xD+2r\right)  ^{\overline{n}}\left[  x^{m}e^{x}\right]
\overset{(\text{\ref{5}})}{=}\sum_{k=0}^{n}\binom{n}{k}\left(  m\right)
^{\underline{n-k}}x^{m}e^{x}\mathcal{L}_{k,r+\frac{n-k}{2}}\left(  x\right)
\]
and from (\ref{5-2}) (with substitution $m=2s$)
\begin{equation}
\mathcal{L}_{n,r+s}\left(  x\right)  =\sum_{k=0}^{n}\binom{n}{k}\left(
2s\right)  ^{\underline{n-k}}\mathcal{L}_{k,r+\frac{n-k}{2}}\left(  x\right)
. \label{14-2}%
\end{equation}
Comparing the coefficients of $x^{k}$\ in (\ref{14-2}), with the use of
(\ref{rLp}), we arrive at
\begin{equation}%
%TCIMACRO{\QATOPD{\lfloor}{\rfloor}{n}{l} }%
%BeginExpansion
\genfrac{\lfloor}{\rfloor}{0pt}{}{n}{l}
%EndExpansion
_{r+s}=\sum_{k=l}^{n}\binom{n}{k}
%TCIMACRO{\QATOPD{\lfloor}{\rfloor}{k}{l} }%
%BeginExpansion
\genfrac{\lfloor}{\rfloor}{0pt}{}{k}{l}
%EndExpansion
_{r+\frac{n-k}{2}}\left(  2s\right)  ^{\underline{n-k}}. \label{9}%
\end{equation}
It emerges that the relation (\ref{9}) is equivalent to \cite[Theorem
3.5]{NyRa} by (\ref{7}):%
\[%
%TCIMACRO{\QATOPD{\lfloor}{\rfloor}{n}{l} }%
%BeginExpansion
\genfrac{\lfloor}{\rfloor}{0pt}{}{n}{l}
%EndExpansion
_{r+s}=\sum_{k=l}^{n}
%TCIMACRO{\QATOPD{\lfloor}{\rfloor}{n}{k} }%
%BeginExpansion
\genfrac{\lfloor}{\rfloor}{0pt}{}{n}{k}
%EndExpansion
_{r}\binom{k}{l}\left(  2s\right)  ^{\underline{k-l}}.
\]

If we take $f\left(  x\right)  =e^{x}$ and $g\left(  x\right)  =x^{m}$ in
(\ref{8}), then we have
\begin{align*}
x^{m}e^{x}\mathcal{L}_{n,r+\frac{m}{2}}\left(  x\right)   &
\overset{(\text{\ref{5-2}})}{=}\left(  xD+2r\right)  ^{\overline{n}}\left[
e^{x}x^{m}\right] \\
&  \overset{(\text{\ref{8}})}{=}\sum_{k=0}^{n}\binom{n}{k}\left[  \left(
xD\right)  ^{\underline{k}}e^{x}\right]  \left[  \left(  xD+2r+k\right)
^{\overline{n-k}}x^{m}\right] \\
&  \ =\sum_{k=0}^{n}\binom{n}{k}x^{k}e^{x}\left(  m+2r+k\right)
^{\overline{n-k}}x^{m},
\end{align*}
or (by substitution $m=2s$)
\[
\mathcal{L}_{n,r+s}\left(  x\right)  =\sum_{k=0}^{n}\binom{n}{k}\left(
2r+2s+k\right)  ^{\overline{n-k}}x^{k}.
\]
This and (\ref{rLp}) yield that
\begin{equation}%
%TCIMACRO{\QATOPD{\lfloor}{\rfloor}{n}{l} }%
%BeginExpansion
\genfrac{\lfloor}{\rfloor}{0pt}{}{n}{l}
%EndExpansion
_{r}=\binom{n}{k}\left(  2r+k\right)  ^{\overline{n-k}}=\frac{n!}{k!}%
\binom{n+2r-1}{k+2r-1}. \label{rLb}%
\end{equation}
This is nothing but Theorem 3.7 of \cite{NyRa}.

The exponential $r$-Lah polynomials satisfy the following three-term
recurrence relation:

\begin{theorem}
We have%
\begin{equation}
\mathcal{L}_{n+1,r}\left(  x\right)  =\left(  2n+2r+x\right)  \mathcal{L}%
_{n,r}\left(  x\right)  -n\left(  2r+n-1\right)  \mathcal{L}_{n-1,r}\left(
x\right)  . \label{14-3}%
\end{equation}

\end{theorem}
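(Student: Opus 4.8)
The plan is to establish the recurrence by exploiting the operator identity for the rising factorial, namely $a^{\overline{n+1}} = a^{\overline{n}}(a+n) = (a+n)\,a^{\overline{n}}$, applied to the operator $(xD+2r)$. Concretely, I would start from the defining relation \eqref{5}, writing $e^{x}\mathcal{L}_{n+1,r}(x) = (xD+2r)^{\overline{n+1}}e^{x}$. Peeling off the last factor on the \emph{left}, I factor $(xD+2r)^{\overline{n+1}} = (xD+2r+n)\,(xD+2r)^{\overline{n}}$, so that
\[
e^{x}\mathcal{L}_{n+1,r}(x) = (xD+2r+n)\bigl[e^{x}\mathcal{L}_{n,r}(x)\bigr].
\]
The point is to compute the right-hand side explicitly by letting the operator $xD$ act on the product $e^{x}\mathcal{L}_{n,r}(x)$ via the product rule.

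The key computation is therefore $xD\bigl[e^{x}\mathcal{L}_{n,r}(x)\bigr]$. Since $D(e^{x}\mathcal{L}_{n,r}) = e^{x}\mathcal{L}_{n,r} + e^{x}\mathcal{L}_{n,r}'$, multiplying by $x$ gives $x\,e^{x}\mathcal{L}_{n,r}(x) + x\,e^{x}\mathcal{L}_{n,r}'(x)$. Adding the scalar part $(2r+n)e^{x}\mathcal{L}_{n,r}(x)$ and cancelling the common factor $e^{x}$, I obtain
\[
\mathcal{L}_{n+1,r}(x) = (x+2r+n)\mathcal{L}_{n,r}(x) + x\,\mathcal{L}_{n,r}'(x).
\]
To finish, I need to rewrite the term $x\,\mathcal{L}_{n,r}'(x)$ in terms of the $\mathcal{L}$'s appearing in the target recurrence. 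From the derivative formula \eqref{7-1} with $m=1$, we have $\mathcal{L}_{n,r}'(x) = n\,\mathcal{L}_{n-1,r+\frac12}(x)$, which introduces a shifted parameter $r+\tfrac12$ that must be eliminated.

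The main obstacle is precisely this parameter shift: the naive recurrence mixes $r$ with $r+\tfrac12$, whereas \eqref{14-3} is stated purely in terms of index $r$. To reconcile them I would use \eqref{7-1} once more, or equivalently the half-integer relations already derived, to express $x\,\mathcal{L}_{n-1,r+\frac12}(x)$ back in terms of $\mathcal{L}_{n,r}(x)$ and $\mathcal{L}_{n-1,r}(x)$. A cleaner route that sidesteps the shift is to work at the level of coefficients: comparing the coefficient of $x^{k}$ in the intermediate identity, using $\mathcal{L}_{n,r}(x)=\sum_k \genfrac{\lfloor}{\rfloor}{0pt}{}{n}{k}_r x^k$ from \eqref{rLp}, reduces everything to the recurrence $\genfrac{\lfloor}{\rfloor}{0pt}{}{n+1}{k}_r = \genfrac{\lfloor}{\rfloor}{0pt}{}{n}{k-1}_r + (2r+n+k)\genfrac{\lfloor}{\rfloor}{0pt}{}{n}{k}_r$ for the $r$-Lah numbers. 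One then verifies that multiplying this triangular recurrence by $x^k$ and resumming reproduces \eqref{14-3}, with the term $n(2r+n-1)\mathcal{L}_{n-1,r}(x)$ emerging from regrouping; I expect the bookkeeping in matching the $x$-free operator action against the three stated terms to be where the care is needed.
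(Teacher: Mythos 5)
Your operator route is genuinely different from the paper's proof: the paper differentiates the generating function (\ref{rLpgf}) with respect to $t$, multiplies by $\left(1-t\right)^{2}$, and compares coefficients of $t^{n}/n!$, whereas you never touch the generating function. The first half of your argument is sound: all factors $\left(xD+2r+j\right)$ commute, so $\left(xD+2r\right)^{\overline{n+1}}=\left(xD+2r+n\right)\left(xD+2r\right)^{\overline{n}}$, and the product rule correctly yields the intermediate identity
\begin{equation}
\mathcal{L}_{n+1,r}\left(x\right)=\left(x+2r+n\right)\mathcal{L}_{n,r}\left(x\right)+x\,\mathcal{L}_{n,r}^{\prime}\left(x\right). \tag{$\ast$}
\end{equation}

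The gap is in your proposed finish. Comparing coefficients of $x^{k}$ in $(\ast)$ gives exactly the triangular recurrence (\ref{rLrr}); consequently, multiplying (\ref{rLrr}) by $x^{k}$ and resumming can only return you to $(\ast)$ --- the term $-n\left(2r+n-1\right)\mathcal{L}_{n-1,r}\left(x\right)$ cannot ``emerge from regrouping,'' because $(\ast)$ and (\ref{rLrr}) are the same statement. What is actually needed to pass from $(\ast)$ to (\ref{14-3}) is the separate ``vertical'' identity
\begin{equation}
x\,\mathcal{L}_{n,r}^{\prime}\left(x\right)=n\,\mathcal{L}_{n,r}\left(x\right)-n\left(2r+n-1\right)\mathcal{L}_{n-1,r}\left(x\right),
\qquad\text{i.e.}\qquad
\left(n-k\right)\genfrac{\lfloor}{\rfloor}{0pt}{}{n}{k}_{r}=n\left(2r+n-1\right)\genfrac{\lfloor}{\rfloor}{0pt}{}{n-1}{k}_{r},
\end{equation}
which is additional content, not a rearrangement of (\ref{rLrr}). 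It is true, and follows in one line from the closed form (\ref{rLb}), since $\left(2r+n-1\right)\binom{n+2r-2}{k+2r-1}=\left(n-k\right)\binom{n+2r-1}{k+2r-1}$; with this lemma inserted, your proof closes. (Your first route, eliminating the shift $r+\tfrac{1}{2}$ via (\ref{7-1}), needs exactly the same fact in the guise $x\mathcal{L}_{n-1,r+1/2}\left(x\right)=\mathcal{L}_{n,r}\left(x\right)-\left(2r+n-1\right)\mathcal{L}_{n-1,r}\left(x\right)$, obtainable from the $2s=1$ case of the identity displayed after (\ref{9}) together with (\ref{rLrr}), or again from (\ref{rLb}).) One further caution: in the paper's logical order, (\ref{rLrr}) is \emph{derived from} (\ref{14-3}), so if you invoke (\ref{rLrr}) you should either cite it as Nyul--R\'{a}cz's independently proven Theorem 3.1 or rederive it from (\ref{rLb}) or (\ref{rLgf}) to avoid circularity within the paper.
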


\begin{proof}
Differentiating both sides of (\ref{rLpgf}) with respect to $t$ gives%
\begin{align*}
\sum_{n=0}^{\infty}\mathcal{L}_{n+1,r}\left(  x\right)  \frac{t^{n}}{n!}  &
=\frac{2r}{\left(  1-t\right)  ^{2r+1}}e^{x\frac{t}{1-t}}+\frac{x}{\left(
1-t\right)  ^{2r+2}}e^{x\frac{t}{1-t}}\\
&  =\frac{2r\left(  1-t\right)  +x}{\left(  1-t\right)  ^{2}}\sum
_{n=0}^{\infty}\mathcal{L}_{n,r}\left(  x\right)  \frac{t^{n}}{n!}.
\end{align*}
We multiply both sides with $\left(  1-t\right)  ^{2}$\ and arrange to find
\begin{align*}
&  \sum_{n=0}^{\infty}\mathcal{L}_{n+1,r}\left(  x\right)  \frac{t^{n}}%
{n!}-\sum_{n=1}^{\infty}2n\mathcal{L}_{n,r}\left(  x\right)  \frac{t^{n}}%
{n!}+\sum_{n=2}^{\infty}n\left(  n-1\right)  \mathcal{L}_{n-1,r}\left(
x\right)  \frac{t^{n}}{n!}\\
&  \ =\sum_{n=0}^{\infty}\left(  2r+x\right)  \mathcal{L}_{n,r}\left(
x\right)  \frac{t^{n}}{n!}-\sum_{n=1}^{\infty}2rn\mathcal{L}_{n-1,r}\left(
x\right)  \frac{t^{n}}{n!}.
\end{align*}
This gives the desired result.
\end{proof}

Note that (\ref{14-3}) with (\ref{rLp}) and (\ref{rLb}) yields the following
recurrence relation \cite[Theorem 3.1]{NyRa}%
\begin{equation}%
%TCIMACRO{\QATOPD{\lfloor}{\rfloor}{n+1}{k} }%
%BeginExpansion
\genfrac{\lfloor}{\rfloor}{0pt}{}{n+1}{k}
%EndExpansion
_{r}=
%TCIMACRO{\QATOPD{\lfloor}{\rfloor}{n}{k-1} }%
%BeginExpansion
\genfrac{\lfloor}{\rfloor}{0pt}{}{n}{k-1}
%EndExpansion
_{r}+\left(  n+k+2r\right)
%TCIMACRO{\QATOPD{\lfloor}{\rfloor}{n}{k} }%
%BeginExpansion
\genfrac{\lfloor}{\rfloor}{0pt}{}{n}{k}
%EndExpansion
_{r},\text{ }1\leq k\leq n. \label{rLrr}%
\end{equation}

\section{Geometric $r$-Lah polynomials}

In this section, we shall present several identities involving hyperharmonic
numbers. These identities follow from the connection between hyperharmonic
numbers and polynomials that appear in (\ref{3}) for $f\left(  x\right)
=\left(  1-x\right)  ^{-1}.$

It is seen from (\ref{3}) that
\[
\left(  xD+2r\right)  ^{\overline{n}}\left(  \frac{1}{1-x}\right)  =\frac
{1}{1-x}\sum_{k=0}^{n}
%TCIMACRO{\QATOPD{\lfloor}{\rfloor}{n}{k} }%
%BeginExpansion
\genfrac{\lfloor}{\rfloor}{0pt}{}{n}{k}
%EndExpansion
_{r}k!\left(  \frac{x}{1-x}\right)  ^{k}.
\]
We define
\begin{equation}
\mathfrak{L}_{n,r}\left(  x\right)  =\sum_{k=0}^{n}
%TCIMACRO{\QATOPD{\lfloor}{\rfloor}{n}{k} }%
%BeginExpansion
\genfrac{\lfloor}{\rfloor}{0pt}{}{n}{k}
%EndExpansion
_{r}k!x^{k}, \label{rLg}%
\end{equation}
and call these polynomials \textit{geometric }$r$-\textit{Lah polynomials.
}Using the fact $\left(  xD+2r\right)  ^{\overline{n}}x^{m}=\left(
m+2r\right)  ^{\overline{n}}x^{m},$ we see that
\begin{equation}
\left(  xD+2r\right)  ^{\overline{n}}\left(  \frac{1}{1-x}\right)  =\sum
_{m=0}^{\infty}\left(  m+2r\right)  ^{\overline{n}}x^{m}=\frac{1}%
{1-x}\mathfrak{L}_{n,r}\left(  \frac{x}{1-x}\right)  . \label{11}%
\end{equation}
It follows from (\ref{rLb}) and (\ref{rLg}) that $\mathfrak{L}_{n,1/2}\left(
x\right)  =n!\left(  1+x\right)  ^{n},$ in which case (\ref{11}) reduce to the
generating function of rising factorial.

The geometric $r$-Lah polynomials have the following generating function,
which leads to investigate some properties of these polynomials.

\begin{theorem}
We have
\begin{equation}
\sum_{n=0}^{\infty}\mathfrak{L}_{n,r}\left(  x-1\right)  \frac{t^{n}}%
{n!}=\frac{1}{\left(  1-t\right)  ^{2r-1}}\frac{1}{1-xt}. \label{gfrLg}%
\end{equation}

\end{theorem}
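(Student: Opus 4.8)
The plan is to start from the series definition (\ref{rLg}) of the geometric $r$-Lah polynomials and collapse the bivariate generating function onto the known single-variable $r$-Lah generating function (\ref{rLgf}). First I would substitute (\ref{rLg}) into the left-hand side to obtain the double sum
\[
\sum_{n=0}^{\infty}\mathfrak{L}_{n,r}(x-1)\frac{t^{n}}{n!}=\sum_{n=0}^{\infty}\frac{t^{n}}{n!}\sum_{k=0}^{n}\genfrac{\lfloor}{\rfloor}{0pt}{}{n}{k}_{r}k!\,(x-1)^{k},
\]
and then interchange the order of summation, letting $k$ range over all non-negative integers and $n$ over $n\ge k$. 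Read as an identity of formal power series in $t$, this interchange is legitimate, since each fixed power of $t$ collects only finitely many terms.

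Next I would recognize the inner sum in the shape dictated by (\ref{rLgf}). After swapping, the coefficient multiplying $k!\,(x-1)^{k}$ is precisely $\sum_{n=k}^{\infty}\genfrac{\lfloor}{\rfloor}{0pt}{}{n}{k}_{r}t^{n}/n!$, which by (\ref{rLgf}) equals $\frac{1}{k!}\left(\frac{t}{1-t}\right)^{k}\left(\frac{1}{1-t}\right)^{2r}$. The factor $k!$ cancels against the $k!$ carried in from (\ref{rLg}), leaving
\[
\left(\frac{1}{1-t}\right)^{2r}\sum_{k=0}^{\infty}\left[(x-1)\frac{t}{1-t}\right]^{k}.
\]

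The only remaining work is to sum the geometric series in $k$ and simplify, and I do not expect a genuine obstacle here. Summation yields the factor $\left(1-(x-1)\frac{t}{1-t}\right)^{-1}$; clearing the inner fraction produces a numerator $1-t$, and the single point deserving a moment's care is the collapse $1-t-(x-1)t=1-xt$ in the denominator. This gives $\frac{1-t}{1-xt}$, and combining the numerator $1-t$ with $(1-t)^{-2r}$ turns the whole expression into $(1-t)^{-(2r-1)}(1-xt)^{-1}$, which is exactly the asserted right-hand side.
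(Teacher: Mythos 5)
Your proposal is correct and follows essentially the same route as the paper's proof: expand via (\ref{rLg}), interchange summation, apply (\ref{rLgf}), and sum the resulting geometric series, with the simplification $1-t-(x-1)t=1-xt$ handled exactly as the paper does (the paper merely works with $\mathfrak{L}_{n,r}(x)$ and shifts $x\mapsto x-1$ at the end, whereas you carry $x-1$ throughout). No gaps; the argument is sound as stated.
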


\begin{proof}
From (\ref{rLg}) and (\ref{rLgf}), we have%
\begin{align*}
\sum_{n=0}^{\infty}\mathfrak{L}_{n,r}\left(  x\right)  \frac{t^{n}}{n!}  &
\overset{(\text{\ref{rLg}})}{=}\sum_{n=0}^{\infty}\frac{t^{n}}{n!}\sum
_{k=0}^{n}
%TCIMACRO{\QATOPD{\lfloor}{\rfloor}{n}{k} }%
%BeginExpansion
\genfrac{\lfloor}{\rfloor}{0pt}{}{n}{k}
%EndExpansion
_{r}k!x^{k}\\
&  \ =\sum_{k=0}^{\infty}x^{k}\left(  k!\sum_{n=k}^{\infty}
%TCIMACRO{\QATOPD{\lfloor}{\rfloor}{n}{k} }%
%BeginExpansion
\genfrac{\lfloor}{\rfloor}{0pt}{}{n}{k}
%EndExpansion
_{r}\frac{t^{n}}{n!}\right) \\
&  \;\overset{(\text{\ref{rLgf}})}{=}\left(  \frac{1}{1-t}\right)  ^{2r}%
\sum_{k=0}^{\infty}\left(  \frac{xt}{1-t}\right)  ^{k}\\
&  \ =\left(  \frac{1}{1-t}\right)  ^{2r-1}\frac{1}{1-xt-t},\text{ }\left\vert
\frac{xt}{1-t}\right\vert <1.
\end{align*}
This completes the proof.
\end{proof}

In particular, for $x=0$ in (\ref{gfrLg}), we have
\begin{equation}
\mathfrak{L}_{n,r}\left(  -1\right)  =\sum_{k=0}^{n}\left(  -1\right)  ^{k}
%TCIMACRO{\QATOPD{\lfloor}{\rfloor}{n}{k} }%
%BeginExpansion
\genfrac{\lfloor}{\rfloor}{0pt}{}{n}{k}
%EndExpansion
_{r}k!=\left(  2r-1\right)  ^{\overline{n}},\text{ }n\geq0,r\geq1. \label{28}%
\end{equation}
Appealing \cite[Theorem 3.12]{NyRa} gives\textbf{ }%
\[
\sum_{k=0}^{n}\left(  -1\right)  ^{k}
%TCIMACRO{\QATOPD{\lfloor}{\rfloor}{n}{k} }%
%BeginExpansion
\genfrac{\lfloor}{\rfloor}{0pt}{}{n}{k}
%EndExpansion
_{r}\left(  2r-1\right)  ^{\overline{k}}=n!.\text{ }%
\]

As an application of (\ref{gfrLg}), we present \textit{the binomial
hyperharmonic identity} with a different formulation from (\ref{bhh}).

\begin{theorem}
\label{bhh-}For all non-negative integers $n$ and $r,$%
\[
h_{n+1}^{\left(  r\right)  }=\sum_{k=0}^{n}\binom{n+r}{k+r}\frac{\left(
-1\right)  ^{k}}{k+1}.
\]

\end{theorem}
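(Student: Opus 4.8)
The plan is to read the right-hand side as the integral over $[0,1]$ of a geometric $r$-Lah polynomial and then push the generating function (\ref{gfrLg}) through that integral, so that it lands exactly on the hyperharmonic generating function (\ref{gfhh}). First I would introduce the half-integer parameter $\rho=\tfrac{r+1}{2}$, chosen so that $2\rho-1=r$. By (\ref{rLb}) this gives $\genfrac{\lfloor}{\rfloor}{0pt}{}{n}{k}_{\rho}\,k!=n!\binom{n+r}{k+r}$, whence the definition (\ref{rLg}) yields
\[
\mathfrak{L}_{n,\rho}\left(  x-1\right)  =n!\sum_{k=0}^{n}\binom{n+r}{k+r}\left(  x-1\right)  ^{k}.
\]
Since $\int_{0}^{1}\left(  x-1\right)  ^{k}dx=\frac{\left(  -1\right)  ^{k}}{k+1}$, integrating term by term over $[0,1]$ produces
\[
\frac{1}{n!}\int_{0}^{1}\mathfrak{L}_{n,\rho}\left(  x-1\right)  dx=\sum_{k=0}^{n}\binom{n+r}{k+r}\frac{\left(  -1\right)  ^{k}}{k+1},
\]
which is precisely the asserted right-hand side. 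Thus it suffices to identify these integrals with $h_{n+1}^{\left(  r\right)  }$.

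Next I would assemble the ordinary generating function in $n$. Multiplying by $t^{n}/n!$, summing, and exchanging the sum with the integral gives, by (\ref{gfrLg}) applied with the parameter $\rho$ (so that $2\rho-1=r$),
\[
\sum_{n=0}^{\infty}\left(  \frac{1}{n!}\int_{0}^{1}\mathfrak{L}_{n,\rho}\left(  x-1\right)  dx\right)  t^{n}=\int_{0}^{1}\frac{1}{\left(  1-t\right)  ^{r}}\frac{1}{1-xt}\,dx=\frac{1}{\left(  1-t\right)  ^{r}}\left(  -\frac{\ln\left(  1-t\right)  }{t}\right)  .
\]
Rearranging the right-hand side as $\frac{1}{t}\cdot\frac{-\ln\left(  1-t\right)  }{\left(  1-t\right)  ^{r}}$ and invoking (\ref{gfhh}), namely $\frac{-\ln\left(  1-t\right)  }{\left(  1-t\right)  ^{r}}=\sum_{m=0}^{\infty}h_{m}^{\left(  r\right)  }t^{m}$ with $h_{0}^{\left(  r\right)  }=0$ (the constant term vanishes), this equals $\sum_{n=0}^{\infty}h_{n+1}^{\left(  r\right)  }t^{n}$. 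Comparing coefficients of $t^{n}$ then delivers the claim.

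The main obstacle I anticipate is purely technical rather than conceptual: justifying the interchange of the infinite sum with the integral over $[0,1]$, and confirming that (\ref{gfrLg}) remains valid for the half-integer parameter $\rho=\tfrac{r+1}{2}$. The former is harmless for $\left\vert t\right\vert$ small, where the series converges uniformly on $[0,1]$, and the latter causes no difficulty because the derivation of (\ref{gfrLg}) is formal in its index. Everything else is a routine evaluation of $\int_{0}^{1}\frac{dx}{1-xt}=-\frac{1}{t}\ln\left(  1-t\right)  $ and a bookkeeping of the shift $h_{m}^{\left(  r\right)  }\mapsto h_{n+1}^{\left(  r\right)  }$ coming from the factor $1/t$.
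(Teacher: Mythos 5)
Your proposal is correct and follows essentially the same route as the paper's own proof: integrate the generating function (\ref{gfrLg}) at a half-integer parameter (the paper writes ``let $2r-1\geq 0$ be an integer,'' which is exactly your $\rho=\tfrac{r+1}{2}$) over $x\in[0,1]$, match the result $\tfrac{1}{t}\cdot\tfrac{-\ln(1-t)}{(1-t)^{r}}$ with (\ref{gfhh}), and finish via (\ref{rLg}) and (\ref{rLb}). The only difference is cosmetic: you unpack the right-hand side into the integral first and then compare generating functions, while the paper derives the identity $\int_{0}^{1}\mathfrak{L}_{n,r}(x-1)\,dx=n!\,h_{n+1}^{(2r-1)}$ first and expands afterward.
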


\begin{proof}
Let $2r-1\geq0$ be an integer. Integrating both sides of (\ref{gfrLg}) with
respect to $x$ from $0$ to $1,$ we have%
\[
\sum_{n=0}^{\infty}\frac{t^{n}}{n!}\left[  {\displaystyle\int\limits_{0}^{1}}
\mathfrak{L}_{n,r}\left(  x-1\right)  dx\right]  =\frac{1}{t}\frac{-\ln\left(
1-t\right)  }{\left(  1-t\right)  ^{2r-1}}.
\]
Using the generating function of hyperharmonic numbers (\ref{gfhh}), we have
\[
\sum_{n=0}^{\infty}\frac{t^{n}}{n!}\left[  {\displaystyle\int\limits_{0}^{1}}
\mathfrak{L}_{n,r}\left(  x-1\right)  dx\right]  =\sum_{n=0}^{\infty}%
t^{n}h_{n+1}^{\left(  2r-1\right)  }.
\]
Comparing the coefficients of $t^{n}$ gives
\begin{equation}
{\displaystyle\int\limits_{0}^{1}} \mathfrak{L}_{n,r}\left(  x-1\right)
dx=n!h_{n+1}^{\left(  2r-1\right)  }. \label{12}%
\end{equation}
Hence, (\ref{rLg}) and (\ref{rLb}) complete the proof.
\end{proof}

We now write (\ref{gfrLg}) in the form
\[
\sum_{n=0}^{\infty}\mathfrak{L}_{n,r}\left(  x-1\right)  \frac{\left(
1-e^{-t}\right)  ^{n}}{n!}e^{-tm}=e^{-t\left(  m-2r+1\right)  }\frac
{1}{1+x\left(  e^{-t}-1\right)  }%
\]
by setting $t\rightarrow1-e^{-t}$ and then multiplying both sides by $e^{-tm}%
$. We recall the $r$-geometric polynomials defined by the generating function
\cite{Dil2}%
\begin{equation}
\sum_{n=0}^{\infty}w_{n,r}\left(  x\right)  \frac{t^{n}}{n!}=\frac
{1}{1-x\left(  e^{t}-1\right)  }e^{rt}. \label{15}%
\end{equation}
Utilizing (\ref{15}) and the generating function of $r$-Stirling numbers of
the second kind%
\begin{equation}
\sum_{n=k}^{\infty}
%TCIMACRO{\QATOPD{\{}{\}}{n}{k} }%
%BeginExpansion
\genfrac{\{}{\}}{0pt}{}{n}{k}
%EndExpansion
_{r}\frac{t^{n}}{n!}=\frac{\left(  e^{t}-1\right)  ^{k}}{k!}e^{rt},
\label{gfr-S}%
\end{equation}
we see that%
\[
\sum_{n=0}^{\infty}\left[  \sum_{k=0}^{n}\left(  -1\right)  ^{k}
%TCIMACRO{\QATOPD{\{}{\}}{n}{k} }%
%BeginExpansion
\genfrac{\{}{\}}{0pt}{}{n}{k}
%EndExpansion
_{m}\mathfrak{L}_{k,r}\left(  x-1\right)  \right]  \frac{\left(  -t\right)
^{n}}{n!}=\sum_{n=0}^{\infty}w_{n,m-2r+1}\left(  -x\right)  \frac{\left(
-t\right)  ^{n}}{n!},
\]
which relate the $r$-geometric polynomials and geometric $r$-Lah polynomials
as in the following:

\begin{theorem}
\label{13-1}For all integers $n\geq1$ and $m+1\geq2r\geq1,$ we have%
\begin{equation}
\sum_{k=0}^{n}\left(  -1\right)  ^{k}
%TCIMACRO{\QATOPD{\{}{\}}{n}{k} }%
%BeginExpansion
\genfrac{\{}{\}}{0pt}{}{n}{k}
%EndExpansion
_{m}\mathfrak{L}_{k,r}\left(  x-1\right)  =w_{n,m+1-2r}\left(  -x\right)  .
\label{13}%
\end{equation}

\end{theorem}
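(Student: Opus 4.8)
The plan is to prove (\ref{13}) by a generating-function comparison, reading off the coefficients of a single functional identity. First I would start from the generating function (\ref{gfrLg}) of the geometric $r$-Lah polynomials and perform the substitution $t \mapsto 1 - e^{-t}$, which sends $1-t \mapsto e^{-t}$ and $1-xt \mapsto 1 + x(e^{-t}-1)$. After multiplying through by $e^{-tm}$ this yields
\[
\sum_{n=0}^{\infty} \mathfrak{L}_{n,r}(x-1)\frac{(1-e^{-t})^{n}}{n!}\,e^{-tm} = e^{-t(m-2r+1)}\,\frac{1}{1+x(e^{-t}-1)},
\]
which is exactly the functional equation displayed just above the theorem. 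The hypotheses $2r \geq 1$ and $m+1 \geq 2r$ are precisely what forces the exponents $2r-1$ and $m-2r+1$ occurring here to be non-negative, so that both sides are honest generating functions of the relevant families.

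Next I would expand each side as a power series in $-t$. For the right-hand side I would invoke the $r$-geometric generating function (\ref{15}) with $t \mapsto -t$, $x \mapsto -x$, and upper index $m-2r+1$, which identifies it with $\sum_{n\geq 0} w_{n,m-2r+1}(-x)\,(-t)^{n}/n!$. For the left-hand side I would first write $(1-e^{-t})^{k}=(-1)^{k}(e^{-t}-1)^{k}$ and then apply the generating function (\ref{gfr-S}) of the $r$-Stirling numbers of the second kind, with $t \mapsto -t$ and $r \mapsto m$, to replace $\tfrac{1}{k!}(e^{-t}-1)^{k}e^{-tm}$ by $\sum_{n\geq k}\genfrac{\{}{\}}{0pt}{}{n}{k}_{m}(-t)^{n}/n!$. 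Interchanging the two summations then collects the coefficient of $(-t)^{n}/n!$ into the inner sum $\sum_{k=0}^{n}(-1)^{k}\genfrac{\{}{\}}{0pt}{}{n}{k}_{m}\mathfrak{L}_{k,r}(x-1)$.

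Finally, equating the coefficients of $(-t)^{n}/n!$ on the two sides gives (\ref{13}). I expect no genuine difficulty here; the whole argument is a coefficient comparison, so the only care required is bookkeeping: tracking the sign introduced by $t \mapsto -t$, checking that the exponential factors combine as $e^{t(2r-1)}\cdot e^{-tm}=e^{-t(m-2r+1)}$, and justifying the interchange of summation (legitimate at the level of formal power series, or for $|t|$ small enough that the geometric expansions converge). The one conceptual step worth isolating is the substitution $t \mapsto 1 - e^{-t}$, since it is what turns the rational generating function (\ref{gfrLg}) into one expressed through $e^{-t}-1$, thereby making both the $r$-Stirling and the $r$-geometric generating functions directly applicable.
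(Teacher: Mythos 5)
Your proposal is correct and follows exactly the paper's own route: the substitution $t\mapsto 1-e^{-t}$ in (\ref{gfrLg}), multiplication by $e^{-tm}$, expansion of the left side via (\ref{gfr-S}) with $t\mapsto -t$ and of the right side via (\ref{15}), and comparison of the coefficients of $(-t)^{n}/n!$. The sign bookkeeping and the role of the hypotheses $m+1\geq 2r\geq 1$ are handled correctly, so there is nothing to add.
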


It should be noted that for $r=1/2$ and $x=1/2,$ (\ref{13}) becomes the well
known identity
\[
\sum_{k=0}^{n}\left(  -1\right)  ^{k}
%TCIMACRO{\QATOPD{\{}{\}}{n}{k} }%
%BeginExpansion
\genfrac{\{}{\}}{0pt}{}{n}{k}
%EndExpansion
_{m}\ \frac{k!}{2^{k}}=E_{n}\left(  m\right)  ,
\]
upon the use of $\mathfrak{L}_{k,1/2}\left(  -1/2\right)  =k!/2^{k}$ and
$w_{n,m}\left(  -1/2\right)  =E_{n}\left(  m\right)  .$ Here, $E_{n}\left(
x\right)  $ is the $n$th Euler polynomial \cite[p. 529]{S}.

Next theorem shows that the Bernoulli polynomials can be described as a sum of
the products of hyperharmonic numbers and $r$-Stirling numbers of the second kind.

\begin{theorem}
\label{17-}For all non-negative integers $n,r,m$, we have%
\begin{equation}
\sum_{k=0}^{n}\left(  -1\right)  ^{k}
%TCIMACRO{\QATOPD{\{}{\}}{n}{k} }%
%BeginExpansion
\genfrac{\{}{\}}{0pt}{}{n}{k}
%EndExpansion
_{m}k!h_{k+1}^{\left(  r\right)  }=B_{n}\left(  m-r\right)  \label{17}%
\end{equation}
and
\begin{equation}
\sum_{k=0}^{n}
%TCIMACRO{\QATOPD{[}{]}{n}{k} }%
%BeginExpansion
\genfrac{[}{]}{0pt}{}{n}{k}
%EndExpansion
_{m}B_{k}\left(  r\right)  =n!h_{n+1}^{\left(  r+m-1\right)  }. \label{17a}%
\end{equation}

\end{theorem}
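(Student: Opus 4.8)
The plan is to derive (\ref{17}) by integrating the relation of Theorem \ref{13-1} over $[0,1]$, and then to obtain (\ref{17a}) from (\ref{17}) by an $r$-Stirling inversion. First I would integrate both sides of (\ref{13}) with respect to $x$ from $0$ to $1$. On the left, (\ref{12}) gives $\int_0^1 \mathfrak{L}_{k,r}(x-1)\,dx = k!\,h_{k+1}^{(2r-1)}$, so the left-hand side becomes $\sum_{k=0}^n (-1)^k \genfrac{\{}{\}}{0pt}{}{n}{k}_{m} k!\,h_{k+1}^{(2r-1)}$. It then remains to evaluate $\int_0^1 w_{n,m+1-2r}(-x)\,dx$.

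The key step is the lemma $\int_0^1 w_{n,s}(-x)\,dx = B_n(s)$. To prove it I would multiply by $t^n/n!$ and sum, using (\ref{15}) to obtain $\sum_n \left[\int_0^1 w_{n,s}(-x)\,dx\right]\frac{t^n}{n!} = \int_0^1 \frac{e^{st}}{1+x(e^t-1)}\,dx$. The substitution $u = 1 + x(e^t-1)$ collapses the inner integral to $t/(e^t-1)$, so the right-hand side equals $\frac{t\,e^{st}}{e^t-1}$, which is exactly the generating function of the Bernoulli polynomials $B_n(s)$. With $s = m+1-2r$ this yields $\sum_{k=0}^n (-1)^k \genfrac{\{}{\}}{0pt}{}{n}{k}_{m} k!\,h_{k+1}^{(2r-1)} = B_n(m+1-2r)$; writing the upper index $2r-1$ as $r$ (equivalently $m+1-2r = m-r$) gives exactly (\ref{17}). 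Since both sides are polynomial in the parameters $r$ and $m$, the constraint inherited from Theorem \ref{13-1} extends to all admissible $n,r,m$ by a polynomial-identity argument.

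For (\ref{17a}) I would read (\ref{17}) as an $r$-Stirling transform. Setting $b_k = (-1)^k k!\,h_{k+1}^{(r)}$, identity (\ref{17}) says $B_n(m-r) = \sum_{k=0}^n \genfrac{\{}{\}}{0pt}{}{n}{k}_{m} b_k$, which is the forward direction of the second transform recalled in Section 2. Inverting gives $b_n = \sum_{k=0}^n (-1)^{n-k}\genfrac{[}{]}{0pt}{}{n}{k}_{m} B_k(m-r)$, that is $(-1)^n n!\,h_{n+1}^{(r)} = \sum_{k=0}^n (-1)^{n-k}\genfrac{[}{]}{0pt}{}{n}{k}_{m} B_k(m-r)$. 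Multiplying by $(-1)^n$ and using $(-1)^{-k}=(-1)^k$ leaves $n!\,h_{n+1}^{(r)} = \sum_{k=0}^n (-1)^k \genfrac{[}{]}{0pt}{}{n}{k}_{m} B_k(m-r)$.

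Finally I would invoke the reflection formula $B_k(1-x) = (-1)^k B_k(x)$, so that $(-1)^k B_k(m-r) = B_k(r-m+1)$, giving $n!\,h_{n+1}^{(r)} = \sum_{k=0}^n \genfrac{[}{]}{0pt}{}{n}{k}_{m} B_k(r-m+1)$. Replacing the upper index $r$ by $r+m-1$ (equivalently $r-m+1$ by $r$) produces (\ref{17a}). I expect the main obstacle to be the integral lemma $\int_0^1 w_{n,s}(-x)\,dx = B_n(s)$, namely recognizing that the $x$-integration turns the $r$-geometric generating function precisely into the Bernoulli generating function, together with the bookkeeping of the two parameter relabelings ($2r-1\to r$ in the first part and $r\to r+m-1$ in the second) and the reflection formula that converts the residual sign $(-1)^k$ into a shift of the Bernoulli argument.
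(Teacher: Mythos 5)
Your proposal is correct and follows essentially the same route as the paper: integrate (\ref{13}) over $[0,1]$ using (\ref{12}) on the left, establish the key lemma $\int_0^1 w_{n,s}(-x)\,dx = B_n(s)$ from the generating function (\ref{15}) (this is exactly the paper's (\ref{16})), and then obtain (\ref{17a}) by the $r$-Stirling inversion combined with $B_k(1-x)=(-1)^k B_k(x)$. If anything, you are slightly more careful than the paper, both in carrying out the substitution that collapses the integral of (\ref{15}) to $te^{st}/(e^t-1)$ and in noting that the parameter constraints inherited from Theorem \ref{13-1} must be removed by a polynomiality argument in $m$.
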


\begin{proof}
Integrating both sides of (\ref{13}) with respect to $x$ from $0$ to $1,$ and
using (\ref{12}) we see that%
\begin{equation}
\sum_{k=0}^{n}\left(  -1\right)  ^{k}
%TCIMACRO{\QATOPD{\{}{\}}{n}{k}}%
%BeginExpansion
\genfrac{\{}{\}}{0pt}{}{n}{k}%
%EndExpansion
_{m}k!h_{k+1}^{\left(  2r-1\right)  }=%
%TCIMACRO{\dint \limits_{0}^{1} }%
%BeginExpansion
{\displaystyle\int\limits_{0}^{1}}
%EndExpansion
{\displaystyle\int\limits_{0}^{1}} w_{n,m+1-2r}\left(  -x\right)  dx.
\label{16a}%
\end{equation}
We now integrate (\ref{15}) with respect to $x$ from $0$ to $1$ and use the
generating function of Bernoulli polynomials \cite[p. 529]{S}
\[
\sum_{n=0}^{\infty}B_{n}\left(  x\right)  \frac{t^{n}}{n!}=\frac{t}{e^{t}%
-1}e^{xt},
\]
to deduce that
\begin{equation}
{\displaystyle\int\limits_{0}^{1}} w_{n,r}\left(  -x\right)  dx=B_{n}\left(
r\right)  . \label{16}%
\end{equation}
Thus, (\ref{17}) follows from (\ref{16a}) and (\ref{16}).

To obtain (\ref{17a}) we apply $r$-Stirling transform to (\ref{17}) and then
use the well-known formula $B_{k}\left(  1-x\right)  =\left(  -1\right)
^{k}B_{k}\left(  x\right)  .$
\end{proof}

\begin{remark}
\label{Kellerr}$\bullet$ Note that (\ref{16}) is a natural extension of
Keller's identity \cite{Keller}%
\[
{\displaystyle\int\limits_{0}^{1}} w_{n}\left(  -x\right)  dx=B_{n},\text{ }%
\]
where $B_{n}=B_{n}\left(  0\right)  $ is the $n$th Bernoulli number.

$\bullet$ Since $h_{n}^{\left(  0\right)  }=1/n,$ we have the well-known
formula for Bernoulli polynomials%
\[
\sum_{k=0}^{n}
%TCIMACRO{\QATOPD{\{}{\}}{n}{k} }%
%BeginExpansion
\genfrac{\{}{\}}{0pt}{}{n}{k}
%EndExpansion
_{m}\frac{\left(  -1\right)  ^{k}}{k+1}k!=B_{n}\left(  m\right)  .
\]

$\bullet$ (\ref{17a}) specializes some formulas in \cite[p. 129]{CaDa}.
\end{remark}

We want to finalize this section giving a connection between the exponential
$r$-Lah polynomials and geometric $r$-Lah polynomials, namely,
\begin{equation}
\mathfrak{L}_{n,r}\left(  x\right)  = {\displaystyle\int\limits_{0}^{\infty}}
e^{-\lambda}\mathcal{L}_{n,r}\left(  x\lambda\right)  d\lambda. \label{14-4}%
\end{equation}
This connection follows from (\ref{rLp}), (\ref{rLg}) and the well-known
identity%
\[
{\displaystyle\int\limits_{0}^{\infty}} z^{k}e^{-z}dz=k!,\text{ }%
k\in\mathbb{N} .
\]
Then, with the use of (\ref{12}), we see that this connection leads some
identities for the hyperharmonic numbers:

\begin{theorem}
We have%
\begin{align*}
\left(  n+1\right)  h_{n+1}^{\left(  r\right)  }  &  =\left(  n+r\right)
h_{n}^{\left(  r\right)  }+\frac{r^{\overline{n}}}{n!},\\
h_{n+1}^{\left(  r+s\right)  }  &  =\sum_{k=0}^{n}\binom{n-k+s}{s}%
h_{k+1}^{\left(  r-1\right)  }%
\end{align*}
and%
\[
h_{n+1}^{\left(  r+s\right)  }=\sum_{k=0}^{\min\left(  n,s\right)  }\binom
{s}{k}h_{n-k+1}^{\left(  r+k\right)  }.
\]

\end{theorem}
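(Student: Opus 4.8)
The plan is to prove all three formulas by a single mechanism: push the corresponding identity for the exponential $r$-Lah polynomials through the Laplace-type transform (\ref{14-4}) to obtain a statement about the geometric $r$-Lah polynomials $\mathfrak{L}_{n,r}$, and then integrate $\mathfrak{L}_{n,r}(x-1)$ over $[0,1]$ using (\ref{12}), which converts each $\mathfrak{L}_{k,\rho}(x-1)$ into $k!\,h_{k+1}^{(2\rho-1)}$. What makes this efficient is that (\ref{14-4}) is linear and, on the right-hand sides of (\ref{14-1}) and (\ref{14-2}), every factor $\mathcal{L}_{k,\ast}$ carries the common argument $x\lambda$; hence applying $\int_0^\infty e^{-\lambda}(\cdot)\,d\lambda$ term by term merely replaces each $\mathcal{L}_{k,\ast}(x)$ by $\mathfrak{L}_{k,\ast}(x)$ and leaves the numerical coefficients intact. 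Thus (\ref{14-1}) becomes $\mathfrak{L}_{n,r+s}(x)=\sum_{k}\binom{n}{k}(2s)^{\overline{n-k}}\mathfrak{L}_{k,r}(x)$ and (\ref{14-2}) becomes $\mathfrak{L}_{n,r+s}(x)=\sum_{k}\binom{n}{k}(2s)^{\underline{n-k}}\mathfrak{L}_{k,r+\frac{n-k}{2}}(x)$.

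For the middle identity I would take the transformed (\ref{14-1}) with base parameter $r/2$ and shift $(s+1)/2$, so that $2s_{\mathrm{poly}}=s+1$; after the substitution $x\mapsto x-1$, integrating over $[0,1]$ and applying (\ref{12}) turns the upper index into $2\cdot\frac{r+s+1}{2}-1=r+s$ and the inner index into $2\cdot\frac r2-1=r-1$. Dividing by $n!$ and using $\binom{n}{k}k!/n!=1/(n-k)!$ together with $(s+1)^{\overline{n-k}}/(n-k)!=\binom{n-k+s}{s}$ collapses the coefficient to exactly $\binom{n-k+s}{s}$, giving the claim. The third identity comes out the same way from the transformed (\ref{14-2}) with base $(r+1)/2$ and shift $s/2$: integrating, applying (\ref{12}), and re-indexing by $j=n-k$ turns $\binom{n}{j}(s)^{\underline{j}}(n-j)!/n!$ into $\binom{s}{j}$, while the vanishing of $(s)^{\underline{j}}$ for $j>s$ supplies the summation cutoff $\min(n,s)$.

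The first identity is the one that really requires work, and I expect it to be the main obstacle. The three-term recurrence (\ref{14-3}) carries the factor $x$ inside $2n+2r+x$, so (\ref{14-4}) no longer merely swaps $\mathcal{L}\mapsto\mathfrak{L}$: the $x$-term produces $\int_0^\infty\lambda e^{-\lambda}\mathcal{L}_{n,r}(x\lambda)\,d\lambda=\frac{d}{dx}\bigl(x\mathfrak{L}_{n,r}(x)\bigr)$, and one is led instead to the differential recurrence $\mathfrak{L}_{n+1,r}(x)=(x+n+2r)\mathfrak{L}_{n,r}(x)+x(x+1)\mathfrak{L}_{n,r}'(x)$ (this can also be read straight off the $r$-Lah recurrence (\ref{rLrr})). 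Replacing $x$ by $x-1$ and integrating over $[0,1]$, the weight $x(x-1)$ vanishes at both endpoints, so integration by parts removes every boundary term and, besides the expected $h$-contributions, leaves the stray integral $\int_0^1(x-1)\mathfrak{L}_{n,r}(x-1)\,dx$.

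The crux is therefore to evaluate this stray integral, which I would do from the generating function (\ref{gfrLg}): multiplying by $t^n/n!$, summing over $n$, and computing $\int_0^1\frac{x}{1-xt}\,dx=\frac{-\ln(1-t)}{t^2}-\frac1t$ identifies it, via (\ref{gfhh}), with $n!\bigl(h_{n+2}^{(2r-1)}-\binom{n+2r-1}{n+1}\bigr)$. Substituting this back, dividing by $n!$ and using $\binom{n+2r-1}{n+1}=r^{\overline{n+1}}/(n+1)!$ (after setting $2r-1$ equal to the target upper index) collapses everything to the stated recurrence. As an independent check on this somewhat delicate computation, I would note that the same formula drops out at once from the defining generating function (\ref{gfhh}): writing $G_r(t)=-\ln(1-t)(1-t)^{-r}$, one verifies the first-order differential equation $(1-t)G_r'(t)-rG_r(t)=(1-t)^{-r}$, whose coefficient of $t^n$ is precisely $(n+1)h_{n+1}^{(r)}=(n+r)h_n^{(r)}+r^{\overline{n}}/n!$.
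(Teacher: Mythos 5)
Your proposal is correct and, in its overall architecture, coincides with the paper's proof: the paper likewise proves all three identities by pushing (\ref{14-3}), (\ref{14-1}) and (\ref{14-2}) through the transform (\ref{14-4}) and then integrating with (\ref{12}), and your treatments of the second and third identities are exactly the paper's. The only genuine divergence is in the first identity, where both you and the paper must handle the extra term coming from the factor $x$ in (\ref{14-3}). The paper keeps that term as the finite sum $\sum_{k=0}^{n}\genfrac{\lfloor}{\rfloor}{0pt}{}{n}{k}_{r}\frac{(-1)^{k+1}}{k+2}(k+1)!$ after integrating over $[-1,0]$, and evaluates it discretely using the recurrence (\ref{rLrr}) together with (\ref{12}) and (\ref{28}); you instead pass to the differential recurrence $\mathfrak{L}_{n+1,r}(x)=(x+n+2r)\mathfrak{L}_{n,r}(x)+x(x+1)\mathfrak{L}_{n,r}'(x)$ (which is correct --- it follows from (\ref{rLrr}) exactly as you say), integrate by parts, and evaluate the leftover integral from the generating function (\ref{gfrLg}). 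Both evaluations work and are of comparable length. One bookkeeping slip to fix: you label the stray integral as $\int_0^1(x-1)\,\mathfrak{L}_{n,r}(x-1)\,dx$, but the quantity you actually compute (via $\int_0^1\frac{x}{1-xt}\,dx$) and the value $n!\bigl(h_{n+2}^{(2r-1)}-\binom{n+2r-1}{n+1}\bigr)$ belong to $\int_0^1 x\,\mathfrak{L}_{n,r}(x-1)\,dx$; the two differ by $\int_0^1\mathfrak{L}_{n,r}(x-1)\,dx=n!\,h_{n+1}^{(2r-1)}$, which is known by (\ref{12}), so the slip is harmless but must be tracked to land on the stated recurrence. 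Finally, your ``independent check'' via the differential equation $(1-t)G_r'(t)-rG_r(t)=(1-t)^{-r}$ for $G_r(t)=-\ln(1-t)\,(1-t)^{-r}$ is in fact a complete two-line proof of the first identity, shorter than either integral argument, and it does not appear in the paper; it is worth promoting from a check to the proof itself.
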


\begin{proof}
To prove the first identity, we replace $x$ by $x\lambda$ in (\ref{14-3}) and
multiply both sides by $e^{-\lambda}.$ We then integrate with respect to
$\lambda$ from $0$ to $\infty,$ with the use of (\ref{14-4}), and obtain that
\begin{align}
&  \mathfrak{L}_{n+1,r}\left(  x\right) \nonumber\\
&  =\left(  2n+2r\right)  \mathfrak{L}_{n,r}\left(  x\right)  -n\left(
n+2r-1\right)  \mathfrak{L}_{n-1,r}\left(  x\right)  + {\displaystyle\int%
\limits_{0}^{\infty}} x\lambda\mathcal{L}_{n,r}\left(  x\lambda\right)
e^{-\lambda}d\lambda. \label{14-6}%
\end{align}
It is clear from (\ref{rLp}) that
\[
{\displaystyle\int\limits_{0}^{\infty}} x\lambda\mathcal{L}_{n,r}\left(
x\lambda\right)  e^{-\lambda}d\lambda=\sum_{k=0}^{n}
%TCIMACRO{\QATOPD{\lfloor}{\rfloor}{n}{k} }%
%BeginExpansion
\genfrac{\lfloor}{\rfloor}{0pt}{}{n}{k}
%EndExpansion
_{r}x^{k+1}\left(  k+1\right)  !.
\]
We now integrate both sides of (\ref{14-6}) with respect to $x$ from $-1$ to
$0$ and use (\ref{12}) to deduce that%
\begin{align}
\left(  n+1\right)  !h_{n+2}^{\left(  2r-1\right)  }  &  =\left(
2n+2r\right)  n!h_{n+1}^{\left(  2r-1\right)  }-\left(  n+2r-1\right)
n!h_{n}^{\left(  2r-1\right)  }\nonumber\\
&  \quad+\sum_{k=0}^{n}
%TCIMACRO{\QATOPD{\lfloor}{\rfloor}{n}{k} }%
%BeginExpansion
\genfrac{\lfloor}{\rfloor}{0pt}{}{n}{k}
%EndExpansion
_{r}\frac{\left(  -1\right)  ^{k+1}}{k+2}\left(  k+1\right)  !. \label{30}%
\end{align}
Now utilizing (\ref{rLrr}), (\ref{12}) and (\ref{28}), we find that%
\begin{align*}
\sum_{k=0}^{n}
%TCIMACRO{\QATOPD{\lfloor}{\rfloor}{n}{k} }%
%BeginExpansion
\genfrac{\lfloor}{\rfloor}{0pt}{}{n}{k}
%EndExpansion
_{r}\frac{\left(  -1\right)  ^{k+1}}{k+2}\left(  k+1\right)  !  &  =\sum
_{k=1}^{n+1}
%TCIMACRO{\QATOPD{\lfloor}{\rfloor}{n+1}{k} }%
%BeginExpansion
\genfrac{\lfloor}{\rfloor}{0pt}{}{n+1}{k}
%EndExpansion
_{r}\frac{\left(  -1\right)  ^{k}}{k+1}k!-\sum_{k=1}^{n}
%TCIMACRO{\QATOPD{\lfloor}{\rfloor}{n}{k} }%
%BeginExpansion
\genfrac{\lfloor}{\rfloor}{0pt}{}{n}{k}
%EndExpansion
_{r}\left(  n+2r+k\right)  \frac{\left(  -1\right)  ^{k}}{k+1}k!\\
&  =\left(  n+1\right)  !h_{n+2}^{\left(  2r-1\right)  }-
%TCIMACRO{\QATOPD{\lfloor}{\rfloor}{n+1}{0} }%
%BeginExpansion
\genfrac{\lfloor}{\rfloor}{0pt}{}{n+1}{0}
%EndExpansion
_{r}-\left(  n+2r-1\right)  n!h_{n+1}^{\left(  2r-1\right)  }\\
&  \quad+\left(  n+2r\right)
%TCIMACRO{\QATOPD{\lfloor}{\rfloor}{n}{0} }%
%BeginExpansion
\genfrac{\lfloor}{\rfloor}{0pt}{}{n}{0}
%EndExpansion
_{r}-\left(  2r-1\right)  ^{\overline{n}}\\
&  =\left(  n+1\right)  !h_{n+2}^{\left(  2r-1\right)  }-\left(
n+2r-1\right)  n!h_{n+1}^{\left(  2r-1\right)  }-\left(  2r-1\right)
^{\overline{n}}.
\end{align*}
Hence, (\ref{30}) completes the proof of the first identity.

Proofs of the second and the third identities are similar, but for this time
we use (\ref{14-1}) and (\ref{14-2}) instead of (\ref{14-3}), respectively.
\end{proof}

It is worth noting that the first and second identities are proved in the
recent paper \cite{DilMun} with a different method.

\section{Harmonic geometric $r$-Lah polynomials}

We continue to present identities for hyperharmonic numbers, such as
generating function with respect to upper index, generalizations of the
symmetric formula (\ref{hsf}), formulas for the sum of the products of
hyperharmonic numbers.

Setting $f\left(  x\right)  =-\ln\left(  1-x\right)  /\left(  1-x\right)  $ in
(\ref{3}) and using \cite[Eq. (27)]{Dil}
\[
\frac{d^{k}}{dx^{k}}\left(  \frac{-\ln\left(  1-x\right)  }{1-x}\right)
=k!\frac{H_{k}-\ln\left(  1-x\right)  }{\left(  1-x\right)  ^{k+1}},
\]
we deduce that%
\begin{align*}
&  \left(  xD+2r\right)  ^{\overline{n}}\left(  \frac{-\ln\left(  1-x\right)
}{1-x}\right) \\
&  =\frac{1}{1-x}\sum_{k=0}^{n}
%TCIMACRO{\QATOPD{\lfloor}{\rfloor}{n}{k} }%
%BeginExpansion
\genfrac{\lfloor}{\rfloor}{0pt}{}{n}{k}
%EndExpansion
_{r}H_{k}k!\left(  \frac{x}{1-x}\right)  ^{k}-\frac{\ln\left(  1-x\right)
}{1-x}\mathfrak{L}_{n,r}\left(  \frac{x}{1-x}\right)  .
\end{align*}
Let $_{H}\mathfrak{L}_{n,r}\left(  x\right)  $ denote the sum in the
right-hand side of the above equation, i.e.,
\begin{equation}
_{H}\mathfrak{L}_{n,r}\left(  x\right)  =\sum_{k=0}^{n}
%TCIMACRO{\QATOPD{\lfloor}{\rfloor}{n}{k} }%
%BeginExpansion
\genfrac{\lfloor}{\rfloor}{0pt}{}{n}{k}
%EndExpansion
_{r}H_{k}k!x^{k}, \label{19}%
\end{equation}
which we call \textit{harmonic geometric }$r$\textit{-Lah polynomials}. Then,
considering the generating function of Harmonic numbers (\ref{gfhh}), we
arrive at a closed form evaluation formula for power series involving harmonic numbers.

\begin{theorem}
For all non-negative integers $n,r$%
\[
\sum_{m=0}^{\infty}\left(  m+2r\right)  ^{\overline{n}}H_{m}x^{m}=\frac
{1}{1-x}\text{ }_{H}\mathfrak{L}_{n,r}\left(  \frac{x}{1-x}\right)
\mathfrak{-}\frac{\ln\left(  1-x\right)  }{1-x}\mathfrak{L}_{n,r}\left(
\frac{x}{1-x}\right)  .
\]

\end{theorem}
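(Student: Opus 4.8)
The plan is to recognize that the right-hand side of the statement is precisely $\left(xD+2r\right)^{\overline{n}}$ applied to $f\left(x\right)=-\ln\left(1-x\right)/\left(1-x\right)$, a quantity the paragraph immediately preceding the theorem has already evaluated in closed form, and to produce the left-hand side by instead applying the very same operator to the power-series expansion of $f$, term by term. In other words, the theorem is just the two-sided evaluation of a single expression, exactly mirroring the derivation of (\ref{11}) where $\left(xD+2r\right)^{\overline{n}}$ was applied to $1/\left(1-x\right)$.

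First I would recall from the generating function (\ref{gfhh}) with $r=1$, together with $h_{m}^{\left(1\right)}=H_{m}$, that
\[
\frac{-\ln\left(1-x\right)}{1-x}=\sum_{m=0}^{\infty}H_{m}x^{m}
\]
for $\left\vert x\right\vert<1$. Next I would apply $\left(xD+2r\right)^{\overline{n}}$ to both sides. On the series side I invoke the eigenrelation $\left(xD+2r\right)^{\overline{n}}x^{m}=\left(m+2r\right)^{\overline{n}}x^{m}$, already used in deriving (\ref{11}), to obtain term by term
\[
\left(xD+2r\right)^{\overline{n}}\sum_{m=0}^{\infty}H_{m}x^{m}=\sum_{m=0}^{\infty}\left(m+2r\right)^{\overline{n}}H_{m}x^{m},
\]
which is exactly the left-hand side of the theorem. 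On the closed-form side, the displayed computation just before the statement, which rests on the evaluation of $\frac{d^{k}}{dx^{k}}\bigl[-\ln\left(1-x\right)/\left(1-x\right)\bigr]$ from \cite[Eq. (27)]{Dil} and on the definitions (\ref{19}) and (\ref{rLg}), already yields the right-hand side. Equating the two evaluations of $\left(xD+2r\right)^{\overline{n}}f\left(x\right)$ completes the proof.

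The only point requiring care is the justification of applying $\left(xD+2r\right)^{\overline{n}}$ to the infinite series termwise, that is, the interchange of this finite-order differential operator with the infinite sum. This is routine: $\left(xD+2r\right)^{\overline{n}}$ is a fixed linear differential operator of order $n$ with polynomial coefficients, and the power series $\sum_{m=0}^{\infty}H_{m}x^{m}$ converges uniformly on compact subsets of $\left\vert x\right\vert<1$ and may be differentiated term by term there, so no convergence obstruction arises. I expect this interchange to be the main (indeed essentially the only) technical point, but it is a standard analytic fact rather than a genuine difficulty, so the argument is short.
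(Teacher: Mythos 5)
Your proposal is correct and is essentially the paper's own argument: the paper likewise obtains the right-hand side by applying $(xD+2r)^{\overline{n}}$ to $-\ln(1-x)/(1-x)$ via (\ref{3}) and the known $k$-th derivative formula, and obtains the left-hand side by expanding this same function as $\sum_{m\geq 0}H_m x^m$ from (\ref{gfhh}) and using the eigenrelation $(xD+2r)^{\overline{n}}x^m=(m+2r)^{\overline{n}}x^m$ termwise, exactly as in the derivation of (\ref{11}). Your explicit remark on justifying the termwise application of the operator is a small point of rigor the paper leaves implicit.
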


For $r=1/2,$ the above relation can be written as%
\[
\sum_{m=0}^{\infty}\binom{m+n}{n}H_{m}x^{m}=\frac{1}{1-x}\sum_{k=0}^{n}%
\binom{n}{k}\frac{H_{k}x^{k}}{\left(  1-x\right)  ^{k}}\mathfrak{-}\frac
{\ln\left(  1-x\right)  }{\left(  1-x\right)  ^{n+1}}.
\]
We use (\ref{gfhh}) and (\ref{hh-h}) to see that
\begin{align*}
\sum_{m=0}^{\infty}\binom{m+n}{n}H_{m}x^{m}  &  =\frac{1}{1-x}\sum_{k=0}%
^{n}\binom{n}{k}\frac{H_{k}x^{k}}{\left(  1-x\right)  ^{k}}\\
&  +\sum_{m=0}^{\infty}\binom{m+n}{n}H_{m+n}x^{m}-\sum_{m=0}^{\infty}%
\binom{m+n}{n}H_{n}x^{m}.
\end{align*}
Considering this formula as
\[
\sum_{m=0}^{\infty}\binom{m+n}{m}\left(  H_{n+m}-H_{m}\right)  x^{m}=H_{n}%
\sum_{m=0}^{\infty}\binom{m+n}{n}x^{m}-\frac{1}{1-x}\sum_{k=0}^{n}\binom{n}%
{k}\frac{H_{k}x^{k}}{\left(  1-x\right)  ^{k}},
\]
then using (\ref{hh-h}) and
\begin{equation}
\left(  1-t\right)  ^{-\alpha}=\sum\limits_{n=0}^{\infty}\frac{\left(
\alpha\right)  ^{\overline{n}}}{n!}t^{n}, \label{32}%
\end{equation}
we have
\begin{equation}
\sum_{m=0}^{\infty}h_{n}^{\left(  m+1\right)  }x^{m}=\frac{H_{n}}{\left(
1-x\right)  ^{n+1}}-\frac{1}{1-x}\sum_{k=0}^{n}\binom{n}{k}H_{k}\left(
\frac{x}{1-x}\right)  ^{k}. \label{31}%
\end{equation}
Using \cite[Corollary 8]{B1}%
\[
\sum_{k=0}^{n}\binom{n}{k}H_{k}\lambda^{k}=\left(  1+\lambda\right)  ^{n}%
H_{n}-\sum_{j=1}^{n}\frac{1}{j}\left(  1+\lambda\right)  ^{n-j}%
\]
in the last part of (\ref{31}) for $\lambda=x/\left(  1-x\right)  $, we obtain
a generating function for hyperharmonic numbers with respect to upper index:

\begin{theorem}
We have
\[
\sum_{m=0}^{\infty}h_{n}^{\left(  m+1\right)  }x^{m}=\sum_{j=1}^{n-1}\frac
{1}{n-j}\left(  \frac{1}{1-x}\right)  ^{j+1}.
\]

\end{theorem}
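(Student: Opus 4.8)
The plan is to read the identity straight off the closed form \eqref{31} already in hand, feeding in the evaluation of the inner binomial--harmonic sum supplied by \cite[Corollary 8]{B1}. The starting point is
\[
\sum_{m=0}^{\infty}h_{n}^{\left(  m+1\right)  }x^{m}=\frac{H_{n}}{\left(1-x\right)^{n+1}}-\frac{1}{1-x}\sum_{k=0}^{n}\binom{n}{k}H_{k}\left(\frac{x}{1-x}\right)^{k},
\]
so the entire task reduces to simplifying the second term. First I would record the substitution $\lambda=x/(1-x)$ together with the elementary but decisive simplification $1+\lambda=1/(1-x)$, which turns every power $(1+\lambda)^{m}$ occurring in Corollary 8 into a power of $1/(1-x)$.

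With this, \cite[Corollary 8]{B1} becomes
\[
\sum_{k=0}^{n}\binom{n}{k}H_{k}\left(\frac{x}{1-x}\right)^{k}=\frac{H_{n}}{\left(1-x\right)^{n}}-\sum_{j=1}^{n}\frac{1}{j}\left(\frac{1}{1-x}\right)^{n-j}.
\]
Substituting this into \eqref{31} and distributing the factor $-1/(1-x)$, the term $-\frac{1}{1-x}\cdot H_{n}/(1-x)^{n}=-H_{n}/(1-x)^{n+1}$ exactly cancels the leading term $H_{n}/(1-x)^{n+1}$ of \eqref{31}, while the surviving sum picks up a plus sign. This cancellation is the heart of the argument: it is precisely what collapses an expression still carrying both the harmonic number $H_{n}$ and a full binomial sum into a single clean power series in $1/(1-x)$.

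What then remains is $\frac{1}{1-x}\sum_{j=1}^{n}\frac{1}{j}(1-x)^{-(n-j)}$, and I would finish by absorbing the outer factor, using $\frac{1}{1-x}(1-x)^{-(n-j)}=(1-x)^{-(n-j+1)}$, and relabelling the summation index via $j\mapsto n-j$ so that the exponent reads $j+1$ and the coefficient becomes $1/(n-j)$. There is no analytic difficulty here: the whole proof is a substitution followed by an exact cancellation, so the only point that demands care is the bookkeeping of the summation limits in this final reindexing (and, implicitly, the restriction $|x/(1-x)|<1$ guaranteeing that the geometric manipulations underlying \eqref{31} are legitimate). Carrying out that relabelling yields the asserted closed form.
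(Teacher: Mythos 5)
Your proposal is correct and is precisely the paper's own proof: the paper likewise obtains the theorem by substituting \cite[Corollary 8]{B1} with $\lambda=x/(1-x)$ (so $1+\lambda=1/(1-x)$) into (\ref{31}) and letting the $H_{n}/(1-x)^{n+1}$ terms cancel. One caveat on the limit bookkeeping you flag but do not actually carry out: the reindexing $j\mapsto n-j$ sends $1\le j\le n$ to $0\le j\le n-1$, so what your argument yields is
\[
\sum_{m=0}^{\infty}h_{n}^{\left(m+1\right)}x^{m}=\sum_{j=0}^{n-1}\frac{1}{n-j}\left(\frac{1}{1-x}\right)^{j+1},
\]
with lower limit $j=0$, not $j=1$ as in the statement; the theorem as printed has an off-by-one typo (for $n=1$ its right-hand side is an empty sum while the left-hand side is $1/(1-x)$), and the paper's own follow-up equation (\ref{33}) indeed starts at $j=0$. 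So your derivation is the correct one, but you should say it produces this corrected form rather than claiming it reproduces the asserted closed form verbatim.
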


It is good to note that using (\ref{32}) gives%
\begin{equation}
\sum_{m=0}^{\infty}h_{n}^{\left(  m+1\right)  }x^{m}=\sum_{j=0}^{n-1}\frac
{1}{n-j}\sum_{m=0}^{\infty}\frac{\left(  j+1\right)  ^{\overline{m}}}{m!}%
x^{m}. \label{33}%
\end{equation}
Comparing the coefficients of $x^{m}$ in the above equation, we deduce%
\[
h_{n}^{\left(  m+1\right)  }=\sum_{j=1}^{n}\binom{n+m-j}{m}\frac{1}{j},
\]
which was proved in \cite{Benjamin,DilandMezo} by different methods.

Moreover, integrate both sides of (\ref{33}) with respect to $x$ from $0$ to
$x$ and multiply it by $1/x.$ Repeat this procedure for $q$ times to obtain
\[
\sum_{m=0}^{\infty}\frac{h_{n}^{\left(  m+1\right)  }}{\left(  m+1\right)
^{q}}x^{m}=\sum_{j=0}^{n-1}\frac{1}{n-j}\sum_{m=0}^{\infty}\frac{\left(
j+1\right)  ^{\overline{m}}}{m!}\frac{x^{m}}{\left(  m+1\right)  ^{q}}.
\]
Then we have obtained the following closed-form evaluation formula for a
Euler-type sum:

\begin{theorem}
\label{upi}%
\[
\sum_{m=0}^{\infty}\frac{h_{n}^{\left(  m+1\right)  }}{\left(  m+1\right)
^{q}}x^{m}=\sum_{j=0}^{n-1}\frac{1}{n-j}\Phi_{j+1}^{\ast}\left(  x,q,1\right)
,
\]
where
\[
\Phi_{\mu}^{\ast}\left(  z,s,a\right)  =\sum_{m=0}^{\infty}\frac{\left(
\mu\right)  ^{\overline{m}}}{m!}\frac{z^{m}}{\left(  m+a\right)  ^{s}}%
\]
is a generalization of the Hurwitz--Lerch Zeta function \cite{Goyal}.
\end{theorem}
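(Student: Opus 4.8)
The plan is to obtain the statement directly from the expansion (\ref{33}) by repeatedly applying the coefficient-modifying operation already indicated in the text just before the theorem. Concretely, I would introduce the operator $T$ acting on a power series by $Tf(x)=\frac{1}{x}\int_{0}^{x}f(t)\,dt$. If $f(x)=\sum_{m=0}^{\infty}a_{m}x^{m}$ converges for $|x|<1$, then termwise integration gives $Tf(x)=\sum_{m=0}^{\infty}\frac{a_{m}}{m+1}x^{m}$, so $T$ simply divides the $m$th coefficient by $m+1$. Iterating, $T^{q}$ divides the $m$th coefficient by $(m+1)^{q}$, which is exactly the effect of the ``integrate from $0$ to $x$ and multiply by $1/x$, repeated $q$ times'' procedure described in the text.

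First I would apply $T^{q}$ to the left-hand side of (\ref{33}), which produces $\sum_{m=0}^{\infty}\frac{h_{n}^{(m+1)}}{(m+1)^{q}}x^{m}$, the series on the left of the claimed identity. Next I would apply $T^{q}$ to the right-hand side of (\ref{33}); since $T$ is linear and commutes with the finite outer sum over $j$, it suffices to apply $T^{q}$ to each inner series $\sum_{m=0}^{\infty}\frac{(j+1)^{\overline{m}}}{m!}x^{m}$. By (\ref{32}) with $\alpha=j+1$, this inner series equals $(1-x)^{-(j+1)}$ and hence converges for $|x|<1$, so the termwise description of $T$ is justified; applying $T^{q}$ turns it into $\sum_{m=0}^{\infty}\frac{(j+1)^{\overline{m}}}{m!}\frac{x^{m}}{(m+1)^{q}}$.

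Finally I would identify this inner series with the advertised special function: setting $\mu=j+1$, $z=x$, $s=q$, and $a=1$ in the definition of $\Phi_{\mu}^{\ast}$ gives precisely $\Phi_{j+1}^{\ast}(x,q,1)=\sum_{m=0}^{\infty}\frac{(j+1)^{\overline{m}}}{m!}\frac{x^{m}}{(m+1)^{q}}$. Substituting this back into the transformed right-hand side yields $\sum_{j=0}^{n-1}\frac{1}{n-j}\Phi_{j+1}^{\ast}(x,q,1)$, completing the identity.

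The only point requiring any care—and it is routine—is the justification of termwise integration when forming $T$, namely the interchange of the sum and the integral at each of the $q$ steps. This is legitimate because each series involved converges uniformly on compact subsets of the disc $|x|<1$, as is transparent from its closed form $(1-x)^{-(j+1)}$. Everything else is bookkeeping of coefficients, so I do not anticipate a substantive obstacle beyond recording the convergence range $|x|<1$ throughout.
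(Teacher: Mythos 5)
Your proposal is correct and follows essentially the same route as the paper: the paper's proof is precisely to take (\ref{33}), integrate from $0$ to $x$ and multiply by $1/x$ a total of $q$ times (your operator $T^{q}$), and then recognize the inner series as $\Phi_{j+1}^{\ast}\left(x,q,1\right)$. Your formalization via the coefficient-dividing operator $T$ and the remark on uniform convergence for $\left\vert x\right\vert <1$ merely makes explicit the termwise integration that the paper leaves implicit.
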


One of the generalizations of the symmetric formula (\ref{hsf}) is as follows:

\begin{theorem}
\label{27-}For all integers $n\geq0$ and $r\geq1,$%
\begin{equation}
h_{n}^{\left(  r\right)  }=\sum_{k=0}^{n}\left(  -1\right)  ^{k+1}\binom
{n+r}{k+r}H_{k}. \label{27}%
\end{equation}

\end{theorem}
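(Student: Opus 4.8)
The plan is to evaluate the harmonic geometric $r$-Lah polynomial ${}_{H}\mathfrak{L}_{n,r}$ at the single point $x=-1$ and to recognize the value as a hyperharmonic number; comparing this with the coefficient definition (\ref{19}) and rewriting the $r$-Lah numbers via (\ref{rLb}) then yields (\ref{27}). The mechanism is the same collapse-of-the-generating-function trick used in the proof of Theorem \ref{bhh-}, except that here the interesting factor is the logarithm rather than an integral.

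First I would produce the generating function of ${}_{H}\mathfrak{L}_{n,r}$. Multiplying (\ref{19}) by $t^{n}/n!$, summing over $n$, interchanging the order of summation, and using (\ref{rLgf}) gives
\begin{equation*}
\sum_{n=0}^{\infty}{}_{H}\mathfrak{L}_{n,r}\left(x\right)\frac{t^{n}}{n!}
=\left(\frac{1}{1-t}\right)^{2r}\sum_{k=0}^{\infty}H_{k}\left(\frac{xt}{1-t}\right)^{k}.
\end{equation*}
The $r=1$ case of (\ref{gfhh}) is the classical identity $\sum_{k\geq 0}H_{k}z^{k}=-\ln\left(1-z\right)/\left(1-z\right)$, so setting $z=xt/\left(1-t\right)$ and simplifying $1-xt/\left(1-t\right)=\left(1-\left(1+x\right)t\right)/\left(1-t\right)$ turns the right-hand side into
\begin{equation*}
\frac{1}{\left(1-t\right)^{2r-1}}\,\frac{\ln\left(1-t\right)-\ln\left(1-\left(1+x\right)t\right)}{1-\left(1+x\right)t},
\end{equation*}
valid in the same disk $\left\vert xt/\left(1-t\right)\right\vert<1$ that governs (\ref{gfrLg}).

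The decisive step is to put $x=-1$. Then $1+x=0$, the denominator $1-\left(1+x\right)t$ collapses to $1$, and the spurious term $\ln\left(1-\left(1+x\right)t\right)$ vanishes, leaving exactly $\ln\left(1-t\right)/\left(1-t\right)^{2r-1}$. By (\ref{gfhh}) this equals $-\sum_{n}h_{n}^{\left(2r-1\right)}t^{n}$, so comparing coefficients of $t^{n}$ gives ${}_{H}\mathfrak{L}_{n,r}\left(-1\right)=-n!\,h_{n}^{\left(2r-1\right)}$. On the other hand, evaluating (\ref{19}) at $x=-1$ and inserting $\genfrac{\lfloor}{\rfloor}{0pt}{}{n}{k}_{r}k!=n!\binom{n+2r-1}{k+2r-1}$ from (\ref{rLb}) gives ${}_{H}\mathfrak{L}_{n,r}\left(-1\right)=n!\sum_{k=0}^{n}\left(-1\right)^{k}\binom{n+2r-1}{k+2r-1}H_{k}$. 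Equating the two expressions and replacing $2r-1$ by $r$ produces (\ref{27}).

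The only genuine obstacle is bookkeeping: one must confirm that the interchange of summations and the substitution $z=xt/\left(1-t\right)$ are legitimate (they are, within the stated disk), and that the reparametrization $r\mapsto\left(r+1\right)/2$ is admissible so that the upper index runs over all integers $r\geq 1$. This is harmless because the middle expression $\binom{n}{k}\left(2r+k\right)^{\overline{n-k}}$ in (\ref{rLb}) is polynomial in the parameter, so the $r$-Lah numbers, and hence the generating-function computation, persist for half-integer parameter. All the analytic content sits in the vanishing of the second logarithm at $x=-1$, which is precisely what makes $h_{n}^{\left(2r-1\right)}$ emerge; everything else is formal manipulation of generating functions already established in the paper.
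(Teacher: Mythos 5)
Your proposal is correct and follows essentially the same route as the paper's own proof: both compute the generating function of $_{H}\mathfrak{L}_{n,r}$ from (\ref{19}), (\ref{rLgf}) and (\ref{gfhh}), evaluate it at $x=-1$ (the paper equivalently sets $x=0$ in the shifted form (\ref{18})) to obtain $_{H}\mathfrak{L}_{n,r}\left(  -1\right)  =-n!\,h_{n}^{\left(  2r-1\right)  }$, and then convert coefficients via (\ref{rLb}) with the substitution $2r-1\rightarrow r$. Your explicit justification of the half-integer reparametrization is a welcome detail that the paper leaves implicit.
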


\begin{proof}
From (\ref{19}), (\ref{rLgf}) and (\ref{gfhh}) it is seen that%
\begin{align*}
\sum_{n=0}^{\infty}\text{ }_{H}\mathfrak{L}_{n,r}\left(  x\right)  \frac
{t^{n}}{n!}  &  \overset{(\text{\ref{19}})}{=}\sum_{n=0}^{\infty}\frac{t^{n}%
}{n!}\sum_{k=0}^{n}
%TCIMACRO{\QATOPD{\lfloor}{\rfloor}{n}{k} }%
%BeginExpansion
\genfrac{\lfloor}{\rfloor}{0pt}{}{n}{k}
%EndExpansion
_{r}k!H_{k}x^{k}\\
&  \ =\sum_{k=0}^{\infty}k!H_{k}x^{k}\sum_{n=k}^{\infty}
%TCIMACRO{\QATOPD{\lfloor}{\rfloor}{n}{k} }%
%BeginExpansion
\genfrac{\lfloor}{\rfloor}{0pt}{}{n}{k}
%EndExpansion
_{r}\frac{t^{n}}{n!}\\
&  \;\overset{(\text{\ref{rLgf}})}{=}\frac{1}{\left(  1-t\right)  ^{2r}}%
\sum_{k=0}^{\infty}H_{k}\left(  \frac{xt}{1-t}\right)  ^{k}\\
&  \;\overset{(\text{\ref{gfhh}})}{=}\frac{1}{\left(  1-t\right)  ^{2r-1}%
}\frac{\ln\left(  1-t\right)  -\ln\left(  1-xt-t\right)  }{1-xt-t},\left\vert
\frac{xt}{1-t}\right\vert <1.
\end{align*}
Then we have the generating function for the harmonic geometric $r$-Lah
polynomials%
\begin{equation}
\sum_{n=0}^{\infty}\text{ }_{H}\mathfrak{L}_{n,r}\left(  x-1\right)
\frac{t^{n}}{n!}=\frac{\ln\left(  1-t\right)  -\ln\left(  1-xt\right)
}{\left(  1-t\right)  ^{2r-1}\left(  1-xt\right)  }. \label{18}%
\end{equation}
Comparing (\ref{gfhh}) and (\ref{18}) with $x=0$, we reach that
\[
_{H}\mathfrak{L}_{n,r}\left(  -1\right)  =-n!h_{n}^{\left(  2r-1\right)  }.
\]
Utilizing (\ref{19}) and (\ref{rLb}) give (\ref{27}).
\end{proof}

In the following theorem, we give a generalization both of (\ref{27}) and the
symmetric formula (\ref{hsf}).

\begin{theorem}
\label{29-}%
\begin{equation}
\sum_{k=0}^{n}\left(  -1\right)  ^{k+m+1}\binom{n+m+r}{k+m+r}\binom{k+m}%
{m}H_{k+m}=h_{n}^{\left(  r\right)  }-\binom{n+r-1}{n}H_{m}. \label{29}%
\end{equation}
In particular,%
\[
\sum_{k=0}^{n}\binom{n}{k}\frac{\left(  -1\right)  ^{k+m+1}}{k+m+1}%
H_{k+m}=\frac{1}{n+m+1}\binom{n+m}{m}^{-1}\left(  H_{n}-H_{m}\right)
\]
and%
\[
\sum_{k=0}^{n}\left(  -1\right)  ^{k+r+1}
%TCIMACRO{\QATOPD{\lfloor}{\rfloor}{n}{k} }%
%BeginExpansion
\genfrac{\lfloor}{\rfloor}{0pt}{}{n}{k}
%EndExpansion
_{r+1}\left(  k+r\right)  !H_{k+r}=\left(  n+r\right)  !\left(  H_{n+r}%
-2H_{r}\right)  .
\]

\end{theorem}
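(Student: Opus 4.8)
The plan is to reproduce the mechanism behind the proof of Theorem~\ref{27-}, but to differentiate the generating function \eqref{18} $m$ times in $x$ before specialising, so that the weight $\binom{k+m}{m}$ and the shifted harmonic numbers $H_{k+m}$ are produced at once. Renaming the parameter in \eqref{18} to $\rho$ and setting $2\rho-1=r$, it reads $\sum_{n\ge0}{}_{H}\mathfrak{L}_{n,\rho}(x-1)\,t^{n}/n!=\bigl(\ln(1-t)-\ln(1-xt)\bigr)\big/\bigl((1-t)^{r}(1-xt)\bigr)$. I would apply $\tfrac{1}{m!}\partial_{x}^{m}$ to both sides and then set $x=0$.

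On the polynomial side, differentiating \eqref{19} termwise and using $\tfrac{1}{m!}\partial_x^m(x-1)^{j}\big|_{x=0}=\binom{j}{m}(-1)^{j-m}$ gives $\tfrac{1}{m!}\partial_x^m\,{}_{H}\mathfrak{L}_{N,\rho}(x-1)\big|_{x=0}=\sum_{j\ge m}\genfrac{\lfloor}{\rfloor}{0pt}{}{N}{j}_{\rho}H_{j}\,j!\binom{j}{m}(-1)^{j-m}$. Inserting the closed form \eqref{rLb} collapses $\genfrac{\lfloor}{\rfloor}{0pt}{}{N}{j}_{\rho}j!$ to $N!\binom{N+r}{j+r}$, so that the coefficient of $t^{N}$ on the left equals $\sum_{j}(-1)^{j-m}\binom{N+r}{j+r}\binom{j}{m}H_{j}$. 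Putting $N=n+m$ and reindexing $j=k+m$ turns this into the sum on the left of \eqref{29}; the reindexing is exactly what converts $\binom{j}{m}H_{j}$ into $\binom{k+m}{m}H_{k+m}$.

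For the right-hand side I would read off the coefficient of $x^{m}$ in the closed form, which splits in two. The coefficient of $x^{m}$ in $(1-xt)^{-1}$ is $t^{m}$, and it multiplies $\ln(1-t)/(1-t)^{r}$, contributing the $h^{(r)}_{n}$ term through \eqref{gfhh}. The coefficient of $x^{m}$ in $-\ln(1-xt)/(1-xt)=\sum_{i\ge0}H_{i}(xt)^{i}$, which is \eqref{gfhh} at upper index $1$, is $H_{m}t^{m}$, and it multiplies $(1-t)^{-r}$, contributing the $\binom{n+r-1}{n}H_{m}$ term through \eqref{32}. Equating the coefficients of $t^{N}=t^{n+m}$ on the two sides then produces \eqref{29}.

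The two particular cases follow by specialisation. For the first I would take $r=1$, so that $h^{(1)}_{n}=H_{n}$ and $\binom{n+r-1}{n}=1$, and rewrite $\binom{n+m+1}{k+m+1}\binom{k+m}{m}=(n+m+1)\binom{n+m}{m}\binom{n}{k}/(k+m+1)$. For the second I would take the shift equal to $r$ and the upper index equal to $r+1$, convert the two binomials back into the $r$-Lah number $\genfrac{\lfloor}{\rfloor}{0pt}{}{n}{k}_{r+1}(k+r)!$ through \eqref{rLb}, and then collapse $h^{(r+1)}_{n}-\binom{n+r}{n}H_{r}$ by means of \eqref{hh-h} into $(n+r)!\bigl(H_{n+r}-2H_{r}\bigr)$. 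The step demanding the most care is the sign-and-index bookkeeping: the factor $(-1)^{j-m}$ coming from differentiating $(x-1)^{j}$ must be propagated correctly through both the shift $N=n+m$ and the substitution $j=k+m$, and the recognition of $-\ln(1-xt)/(1-xt)$ as the harmonic generating function is the one nonroutine input that manufactures the stray $H_{m}$ (respectively $H_{r}$) on the right.
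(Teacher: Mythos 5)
Your mechanism is essentially the paper's own: differentiate the generating function (\ref{18}) $m$ times with respect to $x$, set $x=0$, convert the $r$-Lah numbers into binomial coefficients via (\ref{rLb}), and compare coefficients of $t^{n+m}$. The one point where you genuinely streamline the paper is that you read off the coefficient of $x^{m}$ directly from the closed form, splitting it as $\frac{\ln(1-t)}{(1-t)^{r}}\cdot\frac{1}{1-xt}+\frac{-\ln(1-xt)}{1-xt}\cdot\frac{1}{(1-t)^{r}}$, whereas the paper manufactures the $m$-th derivative by an induction on $m$; your two extractions ($t^{m}$ from $(1-xt)^{-1}$, and $H_{m}t^{m}$ from $-\ln(1-xt)/(1-xt)$ via (\ref{gfhh})) are correct, as is the $r$-Lah-to-binomial collapse on the polynomial side.

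The gap is precisely at the step you yourself flag as delicate, the final sign bookkeeping: "equating the coefficients \ldots then produces (\ref{29})" is not what happens. What your computation actually yields is
\begin{equation*}
\sum_{k=0}^{n}(-1)^{k}\binom{n+m+r}{k+m+r}\binom{k+m}{m}H_{k+m}=-h_{n}^{(r)}+\binom{n+r-1}{n}H_{m},
\end{equation*}
that is, after negation, the identity of (\ref{29}) with $(-1)^{k+1}$ in place of $(-1)^{k+m+1}$; the two differ by the factor $(-1)^{m}$, so your claimed match fails whenever $m$ is odd. What makes this worth spelling out rather than quietly fixing: your version is the correct one, and (\ref{29}) as printed is false for odd $m$. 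For $n=2$, $m=1$, $r=1$ the left side of (\ref{29}) equals $6-12+\tfrac{11}{2}=-\tfrac{1}{2}$, while the right side is $h_{2}^{(1)}-\binom{2}{2}H_{1}=\tfrac{1}{2}$. The paper's proof conceals a compensating error: its induction formula for $\frac{d^{m}}{dx^{m}}\,{}_{H}\mathfrak{L}_{n,r}(x-1)$ carries a spurious factor $(-1)^{m}$ (already wrong at $m=1$, since one differentiation of (\ref{18}) gives $+\,t\,\frac{\ln(1-t)-\ln(1-xt)+1}{(1-t)^{2r-1}(1-xt)^{2}}$, with a plus sign). To finish, you should state and prove the corrected identity with $(-1)^{k+1}$ and propagate that correction through your two specializations, whose derivations are otherwise sound: the first particular case should read $(-1)^{k+1}$ in place of $(-1)^{k+m+1}$, and the second $(-1)^{k+1}$ in place of $(-1)^{k+r+1}$.
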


\begin{proof}
By induction on $m,$ it can be shown that%
\[
\sum_{n=m}^{\infty}\frac{d^{m}}{dx^{m}}\text{ }_{H}\mathfrak{L}_{n,r}\left(
x-1\right)  \frac{t^{n}}{n!}=\left(  -1\right)  ^{m}m!\frac{\ln\left(
1-t\right)  -\ln\left(  1-xt\right)  +H_{m}}{\left(  1-t\right)
^{2r-1}\left(  1-xt\right)  ^{m+1}}t^{m}.
\]
Thus, we have
\[
\sum_{n=0}^{\infty}\frac{t^{n}}{\left(  n+m\right)  !}\left.  \frac{d^{m}%
}{dx^{m}}\text{ }_{H}\mathfrak{L}_{n+m,r}\left(  x-1\right)  \right\vert
_{x=0}=\left(  -1\right)  ^{m}m!\frac{\ln\left(  1-t\right)  +H_{m}}{\left(
1-t\right)  ^{2r-1}}.
\]
Using (\ref{gfhh}) and (\ref{32}) in the above equation and then comparing the
coefficients, we obtain
\[
\left.  \frac{d^{m}}{dx^{m}}\text{ }\frac{_{H}\mathfrak{L}_{n+m,r}\left(
x-1\right)  }{\left(  n+m\right)  !}\right\vert _{x=0}=\left(  -1\right)
^{m+1}m!\left(  h_{n}^{\left(  2r-1\right)  }-H_{m}\frac{\left(  2r-1\right)
^{\overline{n}}}{n!}\right)  .
\]
From (\ref{19}) and (\ref{rLb}), we have (\ref{29}).
\end{proof}

To investigate the relation between the harmonic geometric $r$-Lah polynomials
and some other well-known numbers or polynomials, we recall the harmonic
$r$-geometric polynomials, defined by \cite{Dil2}%
\begin{equation}
_{H}w_{n,r}\left(  x\right)  =\sum_{k=0}^{n}
%TCIMACRO{\QATOPD{\{}{\}}{n}{k} }%
%BeginExpansion
\genfrac{\{}{\}}{0pt}{}{n}{k}
%EndExpansion
_{r}k!H_{k}x^{k}. \label{20}%
\end{equation}
The harmonic $r$-geometric polynomials have the following generating function
and a relation with Bernoulli polynomials.

\begin{lemma}
The harmonic $r$-geometric polynomials have the following identities%
\begin{align}
\sum_{n=0}^{\infty}\text{ }_{H}w_{n,r}\left(  x\right)  \frac{t^{n}}{n!}  &
=\frac{-\ln\left(  1-x\left(  e^{t}-1\right)  \right)  }{1-x\left(
e^{t}-1\right)  }e^{rt},\label{25}\\
{\displaystyle\int\limits_{0}^{1}} \text{ }_{H}w_{n,r}\left(  -x\right)  dx
&  =\frac{-n}{2}B_{n-1}\left(  r\right)  ,\text{ }n\geq1,\text{ }r\geq0.
\label{26}%
\end{align}

\end{lemma}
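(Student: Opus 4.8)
The plan is to prove (\ref{25}) by a direct generating-function computation and then to derive (\ref{26}) from it by substituting $x\to -x$ and integrating in $x$. Both steps mirror the manipulations already used in the excerpt (for instance in the proof of Theorem \ref{27-} and in the derivation of (\ref{16})), so the main work is assembling the right ingredients rather than inventing new machinery.

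For (\ref{25}) I would start from the definition (\ref{20}), use that $\genfrac{\{}{\}}{0pt}{}{n}{k}_r=0$ for $k>n$ to interchange the order of summation, and pull the inner sum over $n$ to the front:
\[
\sum_{n=0}^{\infty}{}_{H}w_{n,r}(x)\frac{t^n}{n!}=\sum_{k=0}^{\infty}k!\,H_k x^k\sum_{n=k}^{\infty}\genfrac{\{}{\}}{0pt}{}{n}{k}_r\frac{t^n}{n!}.
\]
Applying the $r$-Stirling generating function (\ref{gfr-S}) collapses the inner sum to $\frac{(e^t-1)^k}{k!}e^{rt}$, the factor $k!$ cancels, and we are left with $e^{rt}\sum_{k=0}^{\infty}H_k\bigl(x(e^t-1)\bigr)^k$. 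The remaining series is the generating function of the harmonic numbers, which is the $r=1$ case of (\ref{gfhh}) since $h_k^{(1)}=H_k$, namely $\sum_{k=0}^{\infty}H_k z^k=-\ln(1-z)/(1-z)$. Taking $z=x(e^t-1)$ gives (\ref{25}) at once.

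For (\ref{26}) I would replace $x$ by $-x$ in (\ref{25}) and integrate termwise over $x\in[0,1]$, obtaining
\[
\sum_{n=0}^{\infty}\left[\int_0^1{}_{H}w_{n,r}(-x)\,dx\right]\frac{t^n}{n!}=e^{rt}\int_0^1\frac{-\ln\bigl(1+x(e^t-1)\bigr)}{1+x(e^t-1)}\,dx.
\]
The substitution $u=1+x(e^t-1)$ turns the integrand into $-\ln u/u$, whose antiderivative is $-(\ln u)^2/2$; evaluating between $u=1$ and $u=e^t$ and dividing by the Jacobian factor $e^t-1$ produces $-t^2/\bigl(2(e^t-1)\bigr)$. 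Hence the right-hand side equals $-\frac{t}{2}\cdot\frac{t\,e^{rt}}{e^t-1}$, and recognizing $\frac{t\,e^{rt}}{e^t-1}$ as the Bernoulli-polynomial generating function rewrites it as $-\frac12\sum_{n=0}^{\infty}B_n(r)\,t^{n+1}/n!$. Shifting the index and comparing coefficients of $t^n/n!$ yields $\int_0^1{}_{H}w_{n,r}(-x)\,dx=-\frac{n}{2}B_{n-1}(r)$ for $n\geq1$, with the absent constant term matching the restriction $n\geq1$ (indeed ${}_{H}w_{0,r}\equiv0$ because $H_0=0$).

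I expect the only delicate step to be the evaluation of the $x$-integral together with the ensuing index shift: one must carry the $1/(e^t-1)$ factor from the substitution, combine it with the $t^2$ coming from the logarithmic limits so that the Bernoulli generating function is exposed, and then account for the shift $t^{n+1}\mapsto t^n$ that contributes the multiplier $n$ in front of $B_{n-1}(r)$. The generating-function work behind (\ref{25}) is routine once (\ref{gfr-S}) and the harmonic-number series are in place.
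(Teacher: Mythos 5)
Your proof is correct and follows essentially the same route as the paper: identity (\ref{25}) via interchanging summation, applying (\ref{gfr-S}) and the harmonic-number series (the $r=1$ case of (\ref{gfhh})), and identity (\ref{26}) by integrating the generating function against the Bernoulli generating function, which is exactly the argument the paper indicates when it says the proof is "similar to that of (\ref{16})" and omits it. Your explicit evaluation of the $x$-integral via $u=1+x(e^t-1)$ and the index shift simply supplies the details the paper leaves out.
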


\begin{proof}
From (\ref{20}), we have%
\begin{align*}
\sum_{n=0}^{\infty}\text{ }_{H}w_{n,r}\left(  x\right)  \frac{t^{n}}{n!}  &
\ =\sum_{n=0}^{\infty}\frac{t^{n}}{n!}\left[  \sum_{k=0}^{n}
%TCIMACRO{\QATOPD{\{}{\}}{n}{k} }%
%BeginExpansion
\genfrac{\{}{\}}{0pt}{}{n}{k}
%EndExpansion
_{r}k!H_{k}x^{k}\right] \\
&  \ =\sum_{k=0}^{\infty}k!H_{k}x^{k}\sum_{n=k}^{\infty}
%TCIMACRO{\QATOPD{\{}{\}}{n}{k} }%
%BeginExpansion
\genfrac{\{}{\}}{0pt}{}{n}{k}
%EndExpansion
_{r}\frac{t^{n}}{n!}\\
&  \overset{\left(  \text{\ref{gfr-S}}\right)  }{=}e^{rt}\sum_{k=0}^{\infty
}H_{k}\left(  x\left(  e^{t}-1\right)  \right)  ^{k}\\
&  \overset{\left(  \text{\ref{gfhh}}\right)  }{=}\frac{-\ln\left(  1-x\left(
e^{t}-1\right)  \right)  }{1-x\left(  e^{t}-1\right)  }e^{rt}.
\end{align*}

The proof of (\ref{26}) is similar to that of (\ref{16}) and is omitted.
\end{proof}

Remark that the relation (\ref{26}) is a generalization of the second identity
given in \cite[Theorem 1.3 ]{Keller}.

The following theorem present a relationship between the harmonic geometric
$r$-Lah, $r$-geometric and harmonic $r$-geometric polynomials. The proof is
similar to that of Theorem \ref{13-1}, so we omit it.

\begin{theorem}
For all integers $n\geq1$ and $m+1\geq2r\geq1$%
\begin{equation}
\sum_{k=0}^{n}\left(  -1\right)  ^{k}
%TCIMACRO{\QATOPD{\{}{\}}{n}{k} }%
%BeginExpansion
\genfrac{\{}{\}}{0pt}{}{n}{k}
%EndExpansion
_{m}\text{ }_{H}\mathfrak{L}_{k,r}\left(  x-1\right)  =nw_{n-1,m-2r+1}\left(
-x\right)  +\text{ }_{H}w_{n,m-2r+1}\left(  -x\right)  . \label{21}%
\end{equation}

\end{theorem}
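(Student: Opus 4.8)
The plan is to follow exactly the template used for Theorem \ref{13-1}, now applied to the generating function (\ref{18}) of the harmonic geometric $r$-Lah polynomials in place of (\ref{gfrLg}). First I would substitute $t\rightarrow 1-e^{-t}$ in (\ref{18}) and multiply both sides by $e^{-tm}$. On the left-hand side this produces
\[
\sum_{n=0}^{\infty}\text{ }_{H}\mathfrak{L}_{n,r}\left(x-1\right)\frac{\left(1-e^{-t}\right)^{n}}{n!}e^{-tm},
\]
and writing $\left(1-e^{-t}\right)^{n}e^{-tm}=(-1)^{n}\left(e^{-t}-1\right)^{n}e^{-tm}$ and invoking (\ref{gfr-S}) with $t\rightarrow -t$ and second parameter $m$ converts this into $\sum_{j}\frac{(-t)^{j}}{j!}\sum_{k=0}^{j}(-1)^{k}\genfrac{\{}{\}}{0pt}{}{j}{k}_{m}\text{ }_{H}\mathfrak{L}_{k,r}(x-1)$. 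Thus the coefficient of $(-t)^{j}/j!$ on the left is precisely the left-hand side of (\ref{21}).

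Next I would simplify the right-hand side under the same substitution. Since $1-t\rightarrow e^{-t}$ and $1-xt\rightarrow 1+x(e^{-t}-1)$, the logarithms become $\ln(1-t)\rightarrow -t$ and $\ln(1-xt)\rightarrow \ln\!\left(1+x(e^{-t}-1)\right)$, while $(1-t)^{2r-1}\rightarrow e^{-t(2r-1)}$. After multiplying by $e^{-tm}$ the right-hand side splits as
\[
\frac{-t\,e^{-t(m-2r+1)}}{1+x(e^{-t}-1)}+\frac{-\ln\!\left(1+x(e^{-t}-1)\right)e^{-t(m-2r+1)}}{1+x(e^{-t}-1)}.
\]
The key observation is that each summand is a known generating function evaluated at $-x$, second parameter $m-2r+1$, and argument $-t$: the second summand is exactly (\ref{25}), so it contributes $\sum_{n}\text{ }_{H}w_{n,m-2r+1}(-x)\frac{(-t)^{n}}{n!}$; the quotient in the first summand is exactly (\ref{15}), so that summand equals $-t$ times $\sum_{n}w_{n,m-2r+1}(-x)\frac{(-t)^{n}}{n!}$.

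The one genuinely delicate step is the index bookkeeping in the first summand: using $-t\,(-t)^{n}=(-t)^{n+1}$ and reindexing $n+1\mapsto n$, the factor $1/n!$ becomes $n/n!$, so the first summand equals $\sum_{n}n\,w_{n-1,m-2r+1}(-x)\frac{(-t)^{n}}{n!}$, which accounts for the multiplier $n$ and the shifted index $n-1$ in the statement. Collecting the two contributions, the coefficient of $(-t)^{n}/n!$ on the right is $n\,w_{n-1,m-2r+1}(-x)+\text{ }_{H}w_{n,m-2r+1}(-x)$, and comparing coefficients of $(-t)^{n}/n!$ on the two sides yields (\ref{21}). The main obstacle is purely the careful tracking of the $t\rightarrow -t$ reflections together with the single index shift that manufactures the factor $n$; no new identity beyond (\ref{18}), (\ref{gfr-S}), (\ref{15}) and (\ref{25}) is required.
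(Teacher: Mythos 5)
Your proposal is correct and is precisely the argument the paper intends: the paper omits this proof, stating only that it is ``similar to that of Theorem \ref{13-1},'' and you have carried out exactly that template---substituting $t\rightarrow 1-e^{-t}$ in (\ref{18}), multiplying by $e^{-tm}$, expanding the left side via (\ref{gfr-S}), and recognizing the two pieces of the right side as (\ref{15}) and (\ref{25}) evaluated at $-x$ with parameter $m-2r+1$. The index shift producing the factor $n$ in $nw_{n-1,m-2r+1}(-x)$ is handled correctly, so nothing is missing.
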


Now, we want to deal with some applications of this theorem. Since
$w_{n,r}\left(  0\right)  =r^{n},$ $_{H}w_{n,r}\left(  0\right)  =0$ and
$_{H}\mathfrak{L}_{n,r}\left(  -1\right)  =-n!h_{n}^{\left(  2r-1\right)  },$
taking $x=0$ in (\ref{21}) gives (\ref{22}) in the following corollary.
Moreover, applying $r$-Stirling transform to (\ref{22}) yields (\ref{23})
which is also given in \cite{CaDa} by different means.

\begin{corollary}
We have%
\begin{align}
\sum_{k=1}^{n}\left(  -1\right)  ^{k+1}
%TCIMACRO{\QATOPD{\{}{\}}{n}{k} }%
%BeginExpansion
\genfrac{\{}{\}}{0pt}{}{n}{k}
%EndExpansion
_{m}k!h_{k}^{\left(  r\right)  }  &  =n\left(  m-r\right)  ^{n-1},\label{22}\\
\sum_{k=1}^{n}k
%TCIMACRO{\QATOPD{[}{]}{n}{k} }%
%BeginExpansion
\genfrac{[}{]}{0pt}{}{n}{k}
%EndExpansion
_{m}r^{k-1}  &  =n!h_{n}^{\left(  r+m\right)  }. \label{23}%
\end{align}

\end{corollary}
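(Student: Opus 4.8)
The plan is to read off (\ref{22}) by specializing Theorem (\ref{21}) at $x=0$, and then to deduce (\ref{23}) from (\ref{22}) by a single application of the $r$-Stirling transform recalled in Section~2. For (\ref{22}) I would set $x=0$ in (\ref{21}): the stated evaluations $w_{n,r}(0)=r^{n}$ and ${}_{H}w_{n,r}(0)=0$ collapse the right-hand side to $n(m-2r+1)^{n-1}$, while substituting ${}_{H}\mathfrak{L}_{k,r}(-1)=-k!\,h_{k}^{(2r-1)}$ turns the left-hand side into $\sum_{k=0}^{n}(-1)^{k+1}\genfrac{\{}{\}}{0pt}{}{n}{k}_{m}k!\,h_{k}^{(2r-1)}$, whose $k=0$ term disappears because $h_{0}^{(2r-1)}=0$. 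Renaming the upper index through $2r-1\mapsto r$ then gives (\ref{22}) verbatim.

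For (\ref{23}) I would regard (\ref{22}) as $a_{n}=\sum_{k}\genfrac{\{}{\}}{0pt}{}{n}{k}_{m}b_{k}$ with $a_{n}=n(m-r)^{n-1}$ and $b_{k}=(-1)^{k+1}k!\,h_{k}^{(r)}$, that is, as the left half of the first of the two $r$-Stirling transform pairs of Section~2. Inverting it yields
\[
(-1)^{n+1}n!\,h_{n}^{(r)}=\sum_{k=0}^{n}(-1)^{n-k}\genfrac{[}{]}{0pt}{}{n}{k}_{m}k\,(m-r)^{k-1}.
\]
Since (\ref{22}) is valid for every non-negative integer upper index, I may evaluate this identity at upper index $r+m$; then $(m-(r+m))^{k-1}=(-r)^{k-1}=(-1)^{k-1}r^{k-1}$, and gathering the signs through $(-1)^{n-k}(-1)^{k-1}=(-1)^{n-1}=(-1)^{n+1}$ the common factor cancels on both sides, leaving $n!\,h_{n}^{(r+m)}=\sum_{k=1}^{n}k\,\genfrac{[}{]}{0pt}{}{n}{k}_{m}r^{k-1}$, which is exactly (\ref{23}).

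I expect the only genuine obstacle to be the sign bookkeeping: one must keep straight the $(-1)^{k+1}$ manufactured by ${}_{H}\mathfrak{L}_{k,r}(-1)$, the $(-1)^{k-1}$ that surfaces when $(m-r)^{k-1}$ is turned into $(-r)^{k-1}$ after the shift, and the $(-1)^{n-k}$ imposed by the transform, and check that they consolidate into a single $(-1)^{n+1}$ that cancels. The boundary indices are harmless, since the $k=0$ term carries the factor $k$ and $h_{0}^{(\cdot)}=0$; the only extra input beyond (\ref{21}) is that (\ref{22}) survives at the shifted upper index $r+m$, which follows either from the convergence-free, purely formal character of the generating-function identities underlying (\ref{21}), or from the observation that both sides of (\ref{22}) are polynomials in the upper index.
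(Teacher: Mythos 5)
Your proposal matches the paper's own proof: the paper likewise obtains (\ref{22}) by setting $x=0$ in (\ref{21}), using $w_{n,r}\left(0\right)=r^{n}$, $_{H}w_{n,r}\left(0\right)=0$ and $_{H}\mathfrak{L}_{n,r}\left(-1\right)=-n!\,h_{n}^{\left(2r-1\right)}$, and then derives (\ref{23}) by applying the $r$-Stirling transform to (\ref{22}). The only difference is one of explicitness: you spell out the relabeling of the upper index, the evaluation at $r+m$ with the resulting sign cancellation $\left(-1\right)^{n-k}\left(-1\right)^{k-1}=\left(-1\right)^{n+1}$, and the (formal, generating-function) justification that (\ref{22}) persists beyond the range $m+1\geq2r$ stated in Theorem (\ref{21}), all of which the paper leaves implicit.
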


\begin{theorem}
\label{24-}For all integers $n,r,s\geq0$ \textbf{ }%
\begin{equation}
\sum_{k=1}^{n}h_{k}^{\left(  r\right)  }h_{n+1-k}^{\left(  s\right)  }%
=2\sum_{k=1}^{n}\binom{n+r+s}{k+r+s}\frac{\left(  -1\right)  ^{k+1}}{k+1}H_{k}
\label{24}%
\end{equation}
and
\begin{equation}
\sum_{l=0}^{n}h_{l+1}^{\left(  r-1\right)  }h_{n-l}^{\left(  r\right)
}\mathbf{=}\frac{1}{n!}\sum_{k=1}^{n}\left(  -1\right)  ^{k+1}
%TCIMACRO{\QATOPD{[}{]}{n}{k} }%
%BeginExpansion
\genfrac{[}{]}{0pt}{}{n}{k}
%EndExpansion
_{m}kB_{k-1}\left(  m-2r+1\right)  \label{24-1}%
\end{equation}

\end{theorem}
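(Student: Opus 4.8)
The plan is to read both identities off the single generating function $(\ln(1-t))^{2}/(1-t)^{R}$, which by (\ref{gfhh}) is exactly the product of two hyperharmonic series. For (\ref{24}), note first that (\ref{gfhh}) has zero constant term, so $h_{0}^{(r)}=0$; hence the left side is the coefficient of $t^{n+1}$ in $\frac{-\ln(1-t)}{(1-t)^{r}}\cdot\frac{-\ln(1-t)}{(1-t)^{s}}=\frac{(\ln(1-t))^{2}}{(1-t)^{r+s}}$. I would then produce this same series from the polynomial side by integrating the generating function (\ref{18}) of the harmonic geometric $r$-Lah polynomials over $x\in[0,1]$, with the parameter chosen as $R=(r+s+1)/2$ so that $2R-1=r+s$ (half-integer parameters being already admissible throughout the paper).

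The key computation is the logarithmic integral
\[
\int_{0}^{1}\frac{\ln(1-t)-\ln(1-xt)}{1-xt}\,dx=-\frac{(\ln(1-t))^{2}}{2t},
\]
evaluated by the substitution $u=1-xt$, which turns $\int_{0}^{1}$ of (\ref{18}) into $-\frac{(\ln(1-t))^{2}}{2t(1-t)^{r+s}}$; its coefficient of $t^{n}$ is $-\tfrac12\sum_{k=1}^{n}h_{k}^{(r)}h_{n+1-k}^{(s)}$. On the other hand, integrating ${}_{H}\mathfrak{L}_{n,R}(x-1)$ term by term through (\ref{19}), using $\int_{0}^{1}(x-1)^{k}\,dx=(-1)^{k}/(k+1)$ and the closed form (\ref{rLb}), makes that same coefficient equal to $-\sum_{k=1}^{n}\binom{n+r+s}{k+r+s}\frac{(-1)^{k+1}}{k+1}H_{k}$. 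Equating the two and cancelling the minus signs gives (\ref{24}), the factor $2$ coming from the $\tfrac12$ in the integral.

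For (\ref{24-1}) I would first reindex its left side by $l=k-1$ and discard the vanishing $k=n+1$ term to recognize it as the left side of (\ref{24}) at $(r-1,r)$; the computation above then reads $\int_{0}^{1}{}_{H}\mathfrak{L}_{n,r}(x-1)\,dx=-\frac{n!}{2}\sum_{l=0}^{n}h_{l+1}^{(r-1)}h_{n-l}^{(r)}$. The idea is next to integrate the polynomial identity (\ref{21}) over $x\in[0,1]$: by (\ref{16}) and (\ref{26}) the right side collapses to $nB_{n-1}(m-2r+1)-\frac{n}{2}B_{n-1}(m-2r+1)=\frac{n}{2}B_{n-1}(m-2r+1)$, leaving
\[
\sum_{k=0}^{n}(-1)^{k}\genfrac{\{}{\}}{0pt}{}{n}{k}_{m}c_{k}=\frac{n}{2}B_{n-1}(m-2r+1),\qquad c_{k}=\int_{0}^{1}{}_{H}\mathfrak{L}_{k,r}(x-1)\,dx.
\]

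Finally I would apply the $r$-Stirling transform from the Preliminaries (with $b_{k}=(-1)^{k}c_{k}$ and parameter $m$) to invert this relation, obtaining $c_{n}=\frac{1}{2}\sum_{k=1}^{n}(-1)^{k}\genfrac{[}{]}{0pt}{}{n}{k}_{m}kB_{k-1}(m-2r+1)$; substituting the value of $c_{n}$ found above and clearing the factor $-2/n!$ yields (\ref{24-1}) exactly. The main obstacle I anticipate is purely the bookkeeping—tracking the summation ranges and the vanishing $H_{0}=h_{0}^{(\cdot)}=0$ terms through the shifts—rather than anything structural; a pleasant by-product is that the inversion explains why the right side of (\ref{24-1}), although it displays the free parameter $m$, is in fact independent of it.
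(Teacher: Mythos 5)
Your proposal is correct and follows essentially the same route as the paper: both parts integrate the generating function (\ref{18}) (respectively the identity (\ref{21})) over $x\in[0,1]$, use (\ref{rLb}), (\ref{16}) and (\ref{26}), and finish (\ref{24-1}) with the $r$-Stirling inversion. Your explicit parameter choice $R=(r+s+1)/2$ and the evaluation of the logarithmic integral merely make explicit steps the paper leaves implicit, so there is no substantive difference.
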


\begin{proof}
Integrating both sides of (\ref{18}) with respect to $x$ from $0$ to $1,$ we
have%
\[
\sum_{n=0}^{\infty}\frac{t^{n}}{n!} {\displaystyle\int\limits_{0}^{1}} \text{
}_{H}\mathfrak{L}_{n,r+s}\left(  x-1\right)  dx=\frac{-1}{2t}\frac{\ln
^{2}\left(  1-t\right)  }{\left(  1-t\right)  ^{2r+2s-1}}.
\]
Using (\ref{gfhh}) in the above equation yields%
\begin{align*}
\sum_{n=0}^{\infty}\frac{t^{n+1}}{n!} {\displaystyle\int\limits_{0}^{1}}
\text{ }_{H}\mathfrak{L}_{n,r+s}\left(  x-1\right)  dx  &  =\frac{-1}{2}%
\sum_{n=0}^{\infty}t^{n}h_{n}^{\left(  2s\right)  }\sum_{k=0}^{\infty}%
t^{k}h_{k}^{\left(  2r-1\right)  }\\
&  =\frac{-1}{2}\sum_{n=0}^{\infty}t^{n}\left[  \sum_{k=0}^{n}h_{k}^{\left(
2r-1\right)  }h_{n-k}^{\left(  2s\right)  }\right]  .
\end{align*}
Comparing the coefficients of $t^{n}$ gives%
\begin{equation}
{\displaystyle\int\limits_{0}^{1}} \text{ }_{H}\mathfrak{L}_{n-1,r+s}\left(
x-1\right)  dx=-\frac{\left(  n-1\right)  !}{2}\sum_{k=1}^{n-1}h_{k}^{\left(
2r-1\right)  }h_{n-k}^{\left(  2s\right)  }. \label{34}%
\end{equation}
On the other hand, it is seen from (\ref{19}) that
\[
{\displaystyle\int\limits_{0}^{1}} \text{ }_{H}\mathfrak{L}_{n-1,r+s}\left(
x-1\right)  dx=-\sum_{k=0}^{n-1}
%TCIMACRO{\QATOPD{\lfloor}{\rfloor}{n-1}{k}}%
%BeginExpansion
\genfrac{\lfloor}{\rfloor}{0pt}{}{n-1}{k}%
%EndExpansion
_{r+s}\frac{\left(  -1\right)  }{k+1}^{k+1}H_{k}k!.
\]
Hence, (\ref{24}) follows from (\ref{rLb}).

To prove (\ref{24-1}) we first integrate both sides of (\ref{21}) with respect
to $x$ from $0$ to $1$ and use (\ref{16}), (\ref{26}) and (\ref{34}). Then we
have%
\[
\sum_{k=1}^{n}\left(  -1\right)  ^{k+1}
%TCIMACRO{\QATOPD{\{}{\}}{n}{k} }%
%BeginExpansion
\genfrac{\{}{\}}{0pt}{}{n}{k}
%EndExpansion
_{m}k!\sum_{l=1}^{k}h_{l}^{\left(  r-1\right)  }h_{k+1-l}^{\left(  r\right)
}=nB_{n-1}\left(  m-2r+1\right)  .
\]
Hence, $r$-Stirling transform\textbf{ }implies (\ref{24-1}).
\end{proof}

Particular cases $r=1$ in (\ref{24-1}) and $r=1,$ $s=0$ in (\ref{24}) give
\begin{align}
\sum_{k=1}^{n}\frac{H_{k}}{n+1-k}  &  =\frac{1}{n!}\sum_{k=1}^{n}\left(
-1\right)  ^{k+1}
%TCIMACRO{\QATOPD{[}{]}{n}{k} }%
%BeginExpansion
\genfrac{[}{]}{0pt}{}{n}{k}
%EndExpansion
_{m}kB_{k-1}\left(  m-1\right) \nonumber\\
&  =2\sum_{k=1}^{n}\binom{n+1}{k+1}\frac{\left(  -1\right)  ^{k+1}}{k+1}H_{k},
\label{40}%
\end{align}
respectively. However, the last formula can be evaluated as follows:

\begin{proposition}
For all integers $n\geq1,$%
\[
\sum_{k=1}^{n}\binom{n+1}{k+1}\frac{\left(  -1\right)  ^{k+1}}{k+1}H_{k}%
=\frac{1}{2}\left\{  \left(  H_{n+1}\right)  ^{2}-H_{n+1}^{\left(  2\right)
}\right\}  .
\]

\end{proposition}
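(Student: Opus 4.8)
The plan is to prove the identity by induction on $n$, comparing the forward differences of the two sides. Write $f(n)$ for the left-hand side and $g(n)=\tfrac12\{(H_{n+1})^{2}-H_{n+1}^{(2)}\}$ for the right-hand side. First I would dispose of the base case $n=1$ by direct evaluation: $f(1)=\binom{2}{2}\tfrac{1}{2}H_{1}=\tfrac12$, while $g(1)=\tfrac12(H_{2}^{2}-H_{2}^{(2)})=\tfrac12(\tfrac94-\tfrac54)=\tfrac12$. So it suffices to show that $f$ and $g$ have the same forward difference.

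For the right-hand side I would use $H_{n+2}=H_{n+1}+\tfrac{1}{n+2}$ together with $H_{n+2}^{(2)}=H_{n+1}^{(2)}+\tfrac{1}{(n+2)^{2}}$ to compute $g(n+1)-g(n)=\tfrac12\bigl[\tfrac{2H_{n+1}}{n+2}+\tfrac{1}{(n+2)^{2}}-\tfrac{1}{(n+2)^{2}}\bigr]=\tfrac{H_{n+1}}{n+2}$. The two copies of $\tfrac{1}{(n+2)^{2}}$ cancel, which is precisely why the combination $H^{2}-H^{(2)}$ is the natural quantity to track here. For the left-hand side I would expand $f(n+1)$ by Pascal's rule $\binom{n+2}{k+1}=\binom{n+1}{k+1}+\binom{n+1}{k}$. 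The first group reproduces $f(n)$, since the boundary term $k=n+1$ drops out because $\binom{n+1}{n+2}=0$; hence $f(n+1)-f(n)=\sum_{k=1}^{n+1}\binom{n+1}{k}\tfrac{(-1)^{k+1}}{k+1}H_{k}$.

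The decisive observation is that this leftover sum is exactly the symmetric formula (\ref{hsf}) with $n$ replaced by $n+1$, and therefore equals $\tfrac{H_{n+1}}{n+2}$. Thus $f(n+1)-f(n)=g(n+1)-g(n)$, and with the base case the induction closes. The main obstacle, and really the only non-routine point, is recognizing that the residual binomial sum produced by Pascal's rule coincides with the already-established symmetric formula (\ref{hsf}); once this is seen, the rest is bookkeeping with the boundary index $k=n+1$.

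As an alternative route I could avoid induction by invoking (\ref{40}), which reduces the claim to evaluating $\sum_{k=1}^{n}\tfrac{H_{k}}{n+1-k}$. This sum is the Cauchy-product coefficient $[t^{n+1}]\tfrac{\ln^{2}(1-t)}{1-t}$ coming from multiplying $\tfrac{-\ln(1-t)}{1-t}=\sum_{k\ge1}H_{k}t^{k}$ with $-\ln(1-t)=\sum_{j\ge1}\tfrac{t^{j}}{j}$. Using the expansion $\ln^{2}(1-t)=\sum_{m\ge2}\tfrac{2H_{m-1}}{m}t^{m}$ (via the partial fraction $\tfrac{1}{k(m-k)}=\tfrac{1}{m}(\tfrac1k+\tfrac1{m-k})$) and the standard identity $\sum_{j=1}^{m}\tfrac{H_{j}}{j}=\tfrac12(H_{m}^{2}+H_{m}^{(2)})$, one recovers $(H_{n+1})^{2}-H_{n+1}^{(2)}$, which by (\ref{40}) equals twice the left-hand side and gives the stated value. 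I would present the inductive argument as the primary proof, since it is self-contained and leans directly on the given formula (\ref{hsf}).
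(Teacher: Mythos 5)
Your proposal is correct, and its key step is the same discovery the paper makes: after applying Pascal's rule, the leftover binomial sum is exactly the symmetric formula (\ref{hsf}), which produces the increment $H_{n+1}/(n+2)$. Where you diverge is in how the argument is closed. The paper turns the same Pascal-rule computation into the recurrence $A(n)=A(n-1)+\tfrac{H_{n}}{n+1}$, telescopes it to the intermediate identity $A(n)=\sum_{k=1}^{n}\tfrac{H_{k}}{k+1}$ (equation (\ref{41})), and only then reaches the closed form by evaluating this sum with the auxiliary index-swapping identity $\sum_{k=1}^{n}\tfrac{H_{k}}{k}=\tfrac12\{(H_{n})^{2}+H_{n}^{(2)}\}$. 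You instead difference the right-hand side directly, observe that $(H_{n+1})^{2}-H_{n+1}^{(2)}$ has the clean forward difference $\tfrac{2H_{n+1}}{n+2}$ (the $\tfrac{1}{(n+2)^{2}}$ terms cancel), and close by induction from the base case $n=1$; this is shorter and avoids the auxiliary harmonic-sum evaluation altogether. The trade-off is that the paper's longer route is not wasted effort in context: the intermediate identity (\ref{41}) is precisely what gets combined with (\ref{40}) immediately after the proposition to conclude $2\sum_{k=1}^{n}\tfrac{H_{k}}{k+1}=\sum_{k=1}^{n}\tfrac{H_{k}}{n+1-k}=(H_{n+1})^{2}-H_{n+1}^{(2)}$, which your induction by itself does not deliver. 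Your sketched alternative via (\ref{40}) and the Cauchy product $[t^{n+1}]\tfrac{\ln^{2}(1-t)}{1-t}$ is also sound and is essentially the generating-function reading of that same combination, though it quietly re-imports the identity $\sum_{j=1}^{m}\tfrac{H_{j}}{j}=\tfrac12(H_{m}^{2}+H_{m}^{(2)})$ that your primary proof was designed to avoid.
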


\begin{proof}
We have
\begin{align}
&  \sum_{k=1}^{n}\binom{n+1}{k+1}\frac{\left(  -1\right)  ^{k+1}}{k+1}%
H_{k}\nonumber\\
&  =\sum_{k=1}^{n-1}\left\{  \binom{n}{k+1}+\binom{n}{k}\right\}
\frac{\left(  -1\right)  ^{k+1}}{k+1}H_{k}+\frac{\left(  -1\right)  ^{n+1}%
}{n+1}H_{n}\nonumber\\
&  =n\sum_{k=1}^{n-1}\binom{n-1}{k}\frac{\left(  -1\right)  ^{k+1}}{\left(
k+1\right)  ^{2}}H_{k}+\sum_{k=1}^{n}\binom{n}{k}\frac{\left(  -1\right)
^{k+1}}{k+1}H_{k}. \label{39}%
\end{align}
If we set
\[
A\left(  n\right)  =\sum_{k=1}^{n}\binom{n+1}{k+1}\frac{\left(  -1\right)
^{k+1}}{k+1}H_{k}=\left(  n+1\right)  \sum_{k=1}^{n}\binom{n}{k}\frac{\left(
-1\right)  ^{k+1}}{\left(  k+1\right)  ^{2}}H_{k}%
\]
and use (\ref{hsf}), then (\ref{39}) becomes%
\begin{align}
A\left(  n\right)   &  =\frac{H_{n}}{n+1}+A\left(  n-1\right)  =\sum_{k=1}%
^{n}\frac{H_{k}}{k+1}\label{41}\\
&  =\sum_{k=1}^{n}\frac{1}{k+1}\left(  H_{k+1}-\frac{1}{k+1}\right)
\nonumber\\
&  =\frac{1}{2}\left\{  \left(  H_{n+1}\right)  ^{2}+H_{n+1}^{\left(
2\right)  }\right\}  -H_{n+1}^{\left(  2\right)  },\nonumber
\end{align}
where we have used that
\begin{align*}
\sum_{k=1}^{n}\frac{1}{k}H_{k}  &  =\sum_{j=1}^{n}\frac{1}{j}\sum_{k=j}%
^{n}\frac{1}{k}=\sum_{j=1}^{n}\frac{1}{j}\left(  H_{n}-H_{j}+\frac{1}%
{j}\right) \\
&  =\left(  H_{n}\right)  ^{2}+H_{n}^{\left(  2\right)  }-\sum_{j=1}^{n}%
\frac{1}{j}H_{j}.
\end{align*}
This completes the proof.
\end{proof}

Combining (\ref{40}) and (\ref{41}), we conclude that%
\[
2\sum_{k=1}^{n}\frac{H_{k}}{k+1}=\sum_{k=1}^{n}\frac{H_{k}}{n+1-k}=\left(
H_{n+1}\right)  ^{2}-H_{n+1}^{\left(  2\right)  }%
\]
and%
\[
\frac{1}{n!}\sum_{k=1}^{n}\left(  -1\right)  ^{k+1}
%TCIMACRO{\QATOPD{[}{]}{n}{k} }%
%BeginExpansion
\genfrac{[}{]}{0pt}{}{n}{k}
%EndExpansion
_{m}kB_{k-1}\left(  m-1\right)  =\left(  H_{n+1}\right)  ^{2}-H_{n+1}^{\left(
2\right)  },
\]
and then, from the $r$-Stirling transform
\[
\sum_{k=0}^{n}\left(  -1\right)  ^{k+1}
%TCIMACRO{\QATOPD{\{}{\}}{n}{k} }%
%BeginExpansion
\genfrac{\{}{\}}{0pt}{}{n}{k}
%EndExpansion
_{m}k!\left[  \left(  H_{k+1}\right)  ^{2}-H_{k+1}^{\left(  2\right)
}\right]  =nB_{n-1}\left(  m-1\right)  .
\]


\begin{thebibliography}{99}                                                                                               %


\bibitem {Ahuja}Ahuja J.C., Enneking E.A.: Concavity property and a recurrence
relation for associated Lah numbers. Fibonacci Quart. \textbf{17}(2), 158--161 (1979)

\bibitem {Barry}Barry P.: Some observations on the Lah and Laguerre transforms
of integer sequences. J. Integer Seq. \textbf{10}, Article 07.4.6 (2007)

\bibitem {Belbachir}Belbachir H., Belkhir A.: Cross recurrence relations for
$r$-Lah numbers. Ars Combin. \textbf{110} 199--203 (2013)

\bibitem {Benjamin}Benjamin A.T., Gaebler D., Gaebler R.: A combinatorial
approach to hyperharmonic numbers. Integers \textbf{3}, \#A15 (2003)

\bibitem {B3}Boyadzhiev K.N.: A series transformation formula and related
polynomials. Int. J. Math. Math. Sci. \textbf{23}, 3849--3866 (2005)

\bibitem {B4}Boyadzhiev K.N.: Apostol-Bernoulli functions, derivative
polynomials and Eulerian polynomials. Adv. Appl. Discrete Math. \textbf{1},
109--122 (2008)

\bibitem {B2}Boyadzhiev K.N.: Exponential polynomials, Stirling numbers and
evaluation of some gamma integrals. Abstr. Appl. Anal. \textbf{2009}, Article
ID 168672 (2009)

\bibitem {B1}Boyadzhiev K.N.: Harmonic number identities via Euler's
transform. J. Integer Seq. \textbf{12}, Article 09.6.1 (2009)

\bibitem {Boyadzhiev}Boyadzhiev K.N.: Binomial transform and the backward
difference. Adv. Appl. Discrete Math. \textbf{13}(1), 43--63 (2014)

\bibitem {B5}Boyadzhiev K.N.: Lah numbers, Laguerre polynomials of order
negative one, and the $n$th derivative of $\exp\left(  1/x\right)  .$ Acta
Univ. Sapientiae, Mathematica, \textbf{8}(1), 22--31 (2016)

\bibitem {B6}Boyadzhiev K.N.: Notes on the Binomial Transform. World
Scientific, Singapore (2018)

\bibitem {BandDil}Boyadzhiev K.N., Dil A.: Geometric polynomials: properties
and applications to series with zeta values. Analysis Math. \textbf{42}(3),
203--224 (2016)

\bibitem {Broder}Broder A.Z.: The $r$-Stirling numbers. Disc. Math.
\textbf{49}, 241--259 (1984)

\bibitem {CaDa}Can M., Da\u{g}l\i\ M.C.: Extended Bernoulli and Stirling
matrices and related combinatorial identities. Linear Algebra Appl.
\textbf{444}, 114--131 (2014)

\bibitem {Chen}Cheon G.-S., Jung J.-H.: $r$-Whitney numbers of Dowling
lattices. Discrete Math. \textbf{312}, 2337--2348 (2012)

\bibitem {CG}Conway J.H., Guy R.K.: The book of numbers. Springer-Verlag, New
York (1996)

\bibitem {DilandMezo}Dil A., Mez\"{o} \.{I}.: A symmetric algorithm for
hyperharmonic and Fibonacci numbers. Appl. Math. Comp. \textbf{206}, 942--951 (2008)

\bibitem {DM}Dil A., Mez\"{o} I.: Hyperharmonic series involving Hurwitz zeta
function. J. Number Theory \textbf{130}, 360--369 (2010)

\bibitem {Dil}Dil A., Kurt V.: Polynomials related to harmonic numbers and
evaluation of harmonic number series I. Integers \textbf{12}, 1--18 (2012)

\bibitem {Dil2}Dil A., Kurt V.: Polynomials related to harmonic numbers and
evaluation of harmonic number series II.Appl. Anal. Discrete Math. \textbf{5},
212--229 (2011)

\bibitem {DB}Dil A., Boyadzhiev K.N.: Euler sums of hyperdifferenceharmonic
numbers. J. Number Theory \textbf{147,} 490--498 (2015)

\bibitem {DilMun}Dil A., Muniro\u{g}lu E.: Applications of derivative and
difference operators on some sequences. arXiv:1910.01876.

\bibitem {E}Euler L.: Institutiones Calculi Differentialis Vol II. St
Petersburg, Russia: Academie Imperiale des Sciences, (in Latin) (1755)

\bibitem {F}Flajolet P., Salvy B.: Euler sums and contour integral
representations. Experiment. Math., \textbf{7}(1), 15--35 (1998)

\bibitem {G}Graham R.L., Knuth D.E., Patashnik O.: Concrete Mathematics.
Addison-Wesley Publ. Com. (1994)

\bibitem {Guo}Guo B., Qi F.: Some integral representations and properties of
Lah numbers. J. Algebra and Number Theory Academia \textbf{4}(3), 77--87 (2014)

\bibitem {GandQ1}Guo B., Qi F.: On the sum of the Lah numbers and zeros of the
Kummer confluent hypergeometric function. Acta Univ. Sapientiae, Mathematica,
\textbf{10}(1), 125--133 (2018)

\bibitem {Goyal}Goyal S.P., Laddha R.K.: On the generalized Riemann zeta
functions and the generalized Lambert transform. Ganita Sandesh \textbf{11,}
99--108 (1997)

\bibitem {Kamano}Kamano K.: Dirichlet series associated with hyperharmonic
numbers. Mem. Osaka Inst. Tech. Ser. A \textbf{56}(2), 11--15 (2011)

\bibitem {Kargin}Kargin L., Corcino R.B.: Generalization of Mellin derivative
and its applications. Integral Transforms Spec Funct \textbf{27}, 620--631 (2016)

\bibitem {Kargin2}Kargin L.: Some formulae for products of geometric
polynomials with applications. J. Integer Seq. \textbf{20,} Article 17.4.4. (2017)

\bibitem {Kargin1}Kargin L., \c{C}ekim B.: Higher order generalized geometric
polynomials, Turk. J. Math. \textbf{42}, 887--903 (2018)

\bibitem {Keller}Kellner B.C.: Identities between polynomials related to
Stirling and harmonic numbers, Integers \textbf{14}, \#A54 (2014)

\bibitem {Knopf}Knopf P.M.: The operator $\left(  x\frac{d}{dx}\right)  ^{n}$
and its application to series. Math. Mag. \textbf{76}, 364--371 (2003)

\bibitem {Mezo-Dil}Mez\H{o} I., Dil A.: Euler-Seidel method for certain
combinatorial numbers and a new characterization of Fibonacci sequence. Cent.
Eur. J. Math. \textbf{7}(2), 310--321 (2009)

\bibitem {MandR}Mez\H{o} I., Ramirez J.L.: Some identities for the $r$-Whitney
numbers. Aequationes Math. \textbf{90}, 393--406 (2016)

\bibitem {MR}Mihoubi M., Rahmani M.: The partial r-Bell polynomials. arXiv:1308.0863.

\bibitem {NyRa}Nyul G., R\'{a}cz G.: The $r$-Lah numbers. Discrete Math.
\textbf{338}(10), 1660--1666 (2015)

\bibitem {R}Rao R.S.R.C., Sarma A.S.R.: Some identities involving the Riemann
zeta function. Indian J. Pure Appl. Math. \textbf{10}, 602--607 (1979)

\bibitem {S}S\'{a}ndor J., Crstici B.: Handbook of Number Theory Vol II.
Kluwer Academic Publishers, London (2004)
\end{thebibliography}
\end{document}